\documentclass{amsart}
\usepackage[utf8]{inputenc}
\usepackage{a4wide}
\usepackage{amsmath,amsfonts,amssymb}
\usepackage{amsthm}
\usepackage{ifthen}
\usepackage{longtable}
\usepackage{array}
\usepackage{url}
\usepackage{enumerate}
\usepackage{enumitem}
\usepackage{hyperref}

\usepackage{titlesec}
\titleformat{\section}
  {\center\normalsize\bfseries}{\thesection.}{1em}{}

\textwidth=400pt

\newcommand{\Z}{\mathbb{Z}}
\newcommand{\Q}{\mathbb{Q}}

\newcommand{\eps}{\varepsilon}

\newcommand{\OO}{\mathcal{O}}

\DeclareMathOperator{\id}{id}
\DeclareMathOperator{\Gal}{Gal}

\newcommand{\caselI}[1]{\medskip\noindent\textit{#1}}
\newcommand{\caselII}[1]{\smallskip\textit{#1}}

\newtheorem*{theorem*}{Theorem}
\newtheorem{thm}{Theorem}
\newtheorem{lem}{Lemma}

\newtheorem{prop}{Proposition}
\newtheorem{con}{Conjecture}

\newtheorem*{claim*}{Claim}

	\newcounter{countknownthm}
	
\newtheorem{knownthm}[countknownthm]{Theorem}

\theoremstyle{definition}

\makeatletter
\@namedef{subjclassname@2020}{%
  \textup{2020} Mathematics Subject Classification}
\makeatother

\begin{document}
\title{On a family of unit equations over simplest cubic fields}
\subjclass[2020]{11D61, 11D75, 11J86} 
\keywords{Diophantine equations, unit equations, simplest cubic fields}
\thanks{The authors were supported by the Austrian Science Fund (FWF) under the project I4406.
}

\author[I. Vukusic]{Ingrid Vukusic}
\address{I. Vukusic,
University of Salzburg,
Hellbrunnerstrasse 34/I,
A-5020 Salzburg, Austria}
\email{ingrid.vukusic\char'100sbg.ac.at}

\author[V. Ziegler]{Volker Ziegler}
\address{V. Ziegler,
University of Salzburg,
Hellbrunnerstrasse 34/I,
A-5020 Salzburg, Austria}
\email{volker.ziegler\char'100sbg.ac.at}

\begin{abstract}
Let $a\in \Z$ and $\rho$ be a root of $f_a(x)=x^3-ax^2-(a+3)x-1$, then the number field $K_a=\Q(\rho)$ is called a simplest cubic field. In this paper we consider the family of unit equations $u_1+u_2=n$ where $u_1,u_2\in \Z[\rho]^*$ and $n\in \Z$. We completely solve the unit equations under the restriction $|n|\leq \max\{1,|a|^{1/3}\}$.
\end{abstract}

\maketitle

\section{Introduction}

We consider the family of parameterized polynomials
$$
f_{a}(x)=x^3-ax^2-(a+3)x-1,
$$
with parameter $a\in \Z$.
It is easy to show that for any $a\in \Z$ the polynomial $f_a$ is irreducible with a real root~$\rho$ and discriminant $D=(a^2+3a+9)^2$.
Therefore, the field $K=K_a=\Q(\rho)$ is Galois with cyclic Galois group generated by $\sigma$. In particular, the Galois group is given by $\Gal(K_a/\Q)=\langle \sigma\rangle$ where $\sigma (\rho)=-1-1/\rho$.
This family of cubic fields $K_a$ was first studied systematically by Shanks \cite{Shanks1974}, who called it a family of simplest cubic fields.
As already Shanks noted, $\rho$ and $\sigma(\rho)$ are multiplicatively independent units, which makes estimating the regulator of $K_a$ particularly easy.

This rather simple structure of the family of fields $K_a$, and in particular the simple form of a multiplicatively independent system of units, yield a nice underlying background structure to study various types of families of Diophantine equations. In particular, Thomas \cite{Thomas:1990} and subsequently Mignotte \cite{Mignotte:1993}  studied the family of Thue equations
$$ N_{K_a/\Q}(X-\rho Y)=X^3-aX^2Y-(a+3)XY^2-Y^3=\pm 1.$$
Families of twisted Thue equations were first studied by Levesque and Waldschmidt \cite{Levesque:2015}, who considered the family
$$N_{K_a/\Q}(X-\rho^t Y)=\pm 1.$$
We also want to mention that the integral points of the family of elliptic curves
$$\mathcal E_a:\quad Y^2=f_a(X)$$
have been studied by Duquesne \cite{Duquesne:2001}.

In this paper we want to study the family of unit equations
\begin{equation}\label{eq:unit}
u_1+u_2=n,\qquad n\in \Z,\;\; u_1,u_2\in \OO_{K_a}^*,
\end{equation}
where $\OO_{K_a}$ is the maximal order of $K_a$.

\section{Equivalent solutions, trivial solutions and main result}

Before we state our main result we make some observations. 
As mentioned above, for any $a\in \Z$ the number field $K_a=\Q(\rho)$ corresponding to $f_a(x)$ is Galois and the Galois group is cyclic and generated by $\sigma(\rho)=1-1/\rho$. 

Let $(u_1,u_2,n)$ be a solution to \eqref{eq:unit}. Then we can apply $\sigma$ to \eqref{eq:unit} and since $\sigma$ maps units to units and integers to integers, we see that $(\sigma(u_1),\sigma(u_2),n)$ is another solution to \eqref{eq:unit}. In fact,
\begin{align*}
 &(u_1,u_2,n),& &(\rho(u_1),\rho(u_2),n),& &(\rho^2(u_1),\rho^2(u_2),n),\\
 &(-u_1,-u_2,-n),& &(-\rho(u_1),-\rho(u_2),-n),& &(-\rho^2(u_1),-\rho^2(u_2),-n),\\
 &(u_2,u_1,n),& &(\rho(u_1),\rho(u_2),n),& &(\rho^2(u_1),\rho^2(u_2),n),\\
 &(-u_2,-u_1,-n),& &(-\rho(u_2),-\rho(u_1),-n),& &(-\rho^2(u_2),-\rho^2(u_1),-n)
\end{align*}
are each solutions to \eqref{eq:unit}. Note that these solutions are obtained by a combination of change of sign, permutation of $u_1$ and $u_2$ and application of some $\tau\in\Gal(K_a/\Q)$ to \eqref{eq:unit}. 
That is, with one solution usually come eleven more solutions that can easily be obtained from the first one. 
We shall call solutions that are obtained from $(u_1,u_2,n)$ in this way \textit{equivalent} to $(u_1,u_2,n)$. 
 
Next, note that there are some obvious solutions to \eqref{eq:unit}. Clearly, 
\begin{equation}\label{eq:trivial1}
	(1,1,2)
	\quad \text{and} \quad
	(u, - u , 0) \text{ with } u\in \OO_{K_a}^*
\end{equation}
are solutions to \eqref{eq:unit}. Moreover, one can check that $\rho$ and $\rho+1$ are also in $\OO_{K_a}^*$. Thus
\begin{equation}\label{eq:trivial2}
(\rho+1, -\rho, 1)
\end{equation}
is also a solution to \eqref{eq:unit}. We shall call all solutions that are equivalent to solutions from \eqref{eq:trivial1} and \eqref{eq:trivial2} \textit{trivial solutions}. All non-trivial solutions will be called \textit{sporadic}.

Further, note that $f_a(x)=-x^3f_{-a-3}(1/x)$, so if $\rho$ is a root of $f_a(x)=x^3-ax^2-(a+3)x-1$, then $1/\rho$ is a root of $f_{-a-3}(x)$. Hence $K_a=K_{-a-3}$. Therefore, we may assume that $a\geq -1$ when considering Equation \eqref{eq:unit}.

Finally, let us point out that in this paper we will not consider the units of the maximal order $\OO_{K_a}$ but of the order $\Z[\rho]$. The reason for this is that for the maximal order $\OO_{K_a}$ a parametrisation of the system of fundamental units is in general not known, while for $\Z[\rho]$ Thomas \cite{Thomas1979} showed that $\rho$ and $\sigma(\rho)$ form a system of fundamental units.
 However, we have $\Z[\rho]=\OO_{K_a}$ whenever the discriminant $D=(a^2+9a+3)^2$ is a square of a prime, and this is conjecturally (e.g. by Bunyakovsky's conjecture) infinitely often the case.

Now we state our main result.

\begin{thm}\label{thm:main}
Let $a\geq -1$, let $\rho$ be a root of $f_a(x)$ and consider the Diophantine Equation
\begin{equation}\label{eq:main}
 u_1+u_2=n,
 \qquad 
 u_1,u_2\in \Z[\rho]^*, \;
 n\in \Z, \; |n|\leq \max\{|a|^{1/3},1\}.
\end{equation}
Then for $a\geq 3$ Equation \eqref{eq:main} has only trivial solutions. For $a\leq 2$ every solution is either trivial or equivalent to one of the 10 sporadic solutions $(u_1,u_2,1)$ presented in Table \ref{tab:sporadicSols} in Section \ref{sec:smallSols}.
\end{thm}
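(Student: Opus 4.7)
The plan is to combine Thomas's parametrisation of the fundamental units of $\Z[\rho]$ with the theory of linear forms in logarithms and a Baker--Davenport reduction. Since $\rho$ and $\sigma(\rho)$ form a fundamental system of units, every solution can be written as $u_j=\zeta_j\rho^{x_j}\sigma(\rho)^{y_j}$ with $\zeta_j\in\{\pm 1\}$ and $(x_j,y_j)\in\Z^2$, turning \eqref{eq:main} into
$$
\zeta_1\rho^{x_1}\sigma(\rho)^{y_1}+\zeta_2\rho^{x_2}\sigma(\rho)^{y_2}=n.
$$
Applying the three real embeddings produces a system in the conjugates. For large $a$ one has the well known asymptotics $\rho^{(0)}\sim a$, $\rho^{(1)}\sim-1$, $\rho^{(2)}\sim-1/a$, so the logarithmic magnitudes of $\rho^x\sigma(\rho)^y$ on the three embeddings are, up to $O(1)$ terms, $x\log a$, $-y\log a$ and $(y-x)\log a$ respectively.

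Let $X=\max\{|x_j|,|y_j|\}$. After exploiting the equivalence relation (sign change, swap of $u_1,u_2$, and Galois action) to normalise, one may assume that a specific embedding $k$ carries the dominant conjugate of $u_1$, growing like $a^{cX}$ for a positive constant $c$. Since $|u_1^{(k)}+u_2^{(k)}|=|n|\le a^{1/3}$, the triangle inequality forces $u_2^{(k)}/u_1^{(k)}$ to lie exponentially close to $-1$, and taking logarithms yields a linear form in two logarithms of algebraic numbers,
$$
\Lambda_k=(x_2-x_1)\log|\rho^{(k)}|+(y_2-y_1)\log|\sigma(\rho)^{(k)}|,
$$
satisfying $|\Lambda_k|\le 2|n|/|u_1^{(k)}|$. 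Matveev's theorem then provides a lower bound $|\Lambda_k|\ge\exp(-C\log a\cdot\log X)$ with an explicit constant $C$, and combining the two inequalities gives $X\ll(\log a)^2$.

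To descend from $X\ll(\log a)^2$ to the conclusion $X=0$ for $a\ge 3$, I would carry out a Baker--Davenport or LLL reduction on the lattice in $\R^2$ generated by $\log|\rho^{(k)}|$ and $\log|\sigma(\rho)^{(k)}|$. Since the conjugates admit smooth explicit expansions in $a$, the reduction should go through uniformly for all $a$ above an effectively computable threshold $a_0$. The remaining values $a\in\{-1,0,1,2\}$ and $3\le a\le a_0$ are then treated by an exhaustive search enumerating all triples $(x_j,y_j,\zeta_j)$ with $X$ bounded by the Baker bound, producing precisely the ten sporadic solutions recorded in Table~\ref{tab:sporadicSols}.

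The main obstacle I foresee is twofold. First, several inequivalent sub-cases arise depending on which of the three embeddings carries the largest conjugate of $u_1$ and on the signs of the exponents; in degenerate configurations (for instance $x_1=x_2$, $y_1=y_2$, or when the two largest conjugates lie on different embeddings) the single linear form $\Lambda_k$ collapses and must be supplemented by a second linear form on another embedding or by elementary size comparisons. Second, making the Baker--Davenport reduction genuinely uniform in $a$ requires explicit, $a$-dependent estimates for $\log|\rho^{(k)}|$ and a careful matching of the growing parameter $a$ against the bound on $X$, which is the most delicate technical component of the argument.
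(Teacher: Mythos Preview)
Your overall architecture---parametrise via Thomas's fundamental units, normalise by the equivalence relation, extract a linear form in two logarithms, and finish with a reduction plus a finite search---matches the paper's. The degenerate configurations you flag (e.g.\ $x_1=x_2$) are exactly those the paper isolates first, and your use of Matveev in place of Laurent's two--logarithm bound is harmless since the constants are not critical.

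The genuine gap is in your Step~6. A Baker--Davenport or LLL reduction is a numerical procedure carried out for a \emph{fixed} value of~$a$; there is no standard way to run it ``uniformly for all $a$ above a threshold'', and you do not indicate how you would do so. As it stands, your upper bound $X\ll(\log a)^2$ leaves infinitely many values of $a$ to check and provides no mechanism to bound~$a$ itself.

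The paper closes this gap not by a uniform reduction but by an elementary \emph{lower} bound for $X$ (Proposition~\ref{prop:lowerBound}), exploiting the disparity between the two fundamental units: with $\eps=\rho_1$ and $\delta=-\rho_2$ one has $\log\eps\sim\log a$ while $\log\delta\sim 1/(a+2)$. From the smallness of
\[
\bigl|(x_1-x_2)\log\eps-(y_1-y_2)\log\delta\bigr|<2a^{1/3-X/2}
\]
and $x_1\neq x_2$, a single term $\log\eps$ must be cancelled by at least $\sim(a+2)\log(a+1)$ copies of $\log\delta$, forcing
\[
X\;>\;\tfrac12(a+2)\bigl(\log(a+1)-\log 2\bigr).
\]
Comparing this with the Laurent upper bound $X<343\,\log a\,(10+1.7\log\log a)^2$ yields an absolute bound $a\le 1.48\cdot 10^5$; only then is a (per-$a$) continued-fraction reduction run over the finite range $100<a\le 1.48\cdot10^5$, with $a\le 100$ handled by direct computation. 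This lower-bound step---essentially Thomas's trick---is the missing idea that turns your outline into a proof.
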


Let us outline the strategy of our proof. As mentioned above, $\epsilon:=\rho$ and $\delta:=-\sigma(\rho)$ form a fundamental system of units of $\Z[\rho]$. Therefore, we can write
$$u_1=\pm \epsilon^{x_1}\delta^{y_1},\qquad u_2=\pm \epsilon^{x_2}\delta^{y_2}$$
for some $x_1,x_2,y_1,y_2\in \Z$. Assume that $(u_1,u_2,n)$ is a solution to \eqref{eq:main} and let $X:=\max\{|x_1|,|x_2|,|y_1|,|y_2|\}$. Our first goal is to show that it is always possible to find a solution $(\bar u_1,\bar u_2,n)$ equivalent to $(u_1,u_2,n)$ such that $|\bar u_2|>a^{X/2}$ (Section \ref{sec:embedding}). In Section \ref{sec:specialCases} we deal with some special cases that yield the three families of trivial solutions.
Established these preliminary results we apply lower bounds for linear forms in logarithms to obtain an upper bound for $X$ of the form $X\ll \log a \, (\log \log a)^2$ (Section \ref{Sec:upper}). On the other hand, using a similar trick as in \cite{Thomas:1990} we obtain a lower bound for $X$ of the form $X\gg a \log a$ (see Section \ref{Sec:lower}). These two bounds yield an absolute upper bound for $a$. Using continued fractions we show that there are no sporadic solutions provided that $a> 100$. The case $a\leq 100$ is dealt with in the next section.

\section{Small solutions}\label{sec:smallSols}

For technical reasons we want to exclude small values of the parameter $a$ and start by proving the following proposition:

\begin{prop}\label{prop:smallSols}
For $a\leq 100$ the only solutions to Equation \eqref{eq:main} are those stated in Theorem~\ref{thm:main}.
\end{prop}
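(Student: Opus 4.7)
The plan is a direct computer-aided enumeration for each $a$ in the finite range $\{-1,0,1,\ldots,100\}$. For each such $a$, the admissible values of $n$ lie in the finite set $\{n\in\Z:|n|\leq \max\{1,a^{1/3}\}\}$, which contains at most nine integers (since $100^{1/3}<5$). For each pair $(a,n)$ we aim to list all $(u_1,u_2)\in(\Z[\rho]^*)^2$ with $u_1+u_2=n$, and match the outcome with the trivial families \eqref{eq:trivial1}, \eqref{eq:trivial2} or the table of sporadic solutions in Table~\ref{tab:sporadicSols}.

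Writing $u_i=\pm\epsilon^{x_i}\delta^{y_i}$ with $\epsilon=\rho$ and $\delta=-\sigma(\rho)$, and setting $X:=\max\{|x_1|,|y_1|,|x_2|,|y_2|\}$, the key preparatory step is an explicit bound $X\leq X_0(a)$. This is the quantitative analogue, with all constants computed for each individual $a$, of the argument that will be developed in Section~\ref{sec:embedding}: after replacing $(u_1,u_2,n)$ by an equivalent solution we may assume $|u_2^{(1)}|>C(a)^{X}$ in some fixed real embedding, with $C(a)>1$ a computable constant coming from the two largest quantities among $|\epsilon^{(i)}|,|\delta^{(i)}|$. The relation $u_1=n-u_2$ together with $|n|\leq 4$ forces $|u_1^{(1)}|>C(a)^{X}-4$, so by $|N(u_1)|=1$ the remaining conjugates satisfy $|u_1^{(2)}u_1^{(3)}|\leq 1/(C(a)^{X}-4)$. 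Combined with the trivial upper bounds $|u_1^{(j)}|\leq 4+|\epsilon^{(j)}|^{X}|\delta^{(j)}|^{X}$ for $j=2,3$ and the numerical values of the conjugates, this reduces to a one-variable inequality that gives a concrete $X_0(a)$.

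Once $X_0(a)$ is available, one simply enumerates all tuples $(x_1,y_1,x_2,y_2)$ with $|x_i|,|y_i|\leq X_0(a)$, every sign combination, and every admissible $n$, and tests the equation $\pm\epsilon^{x_1}\delta^{y_1}\pm\epsilon^{x_2}\delta^{y_2}=n$ in sufficient floating-point precision; any candidate that survives is verified by exact arithmetic in $\Z[\rho]$. Bookkeeping modulo the twelvefold equivalence introduced in Section~2 is then used to produce a minimal list of representatives, which is compared against the tables.

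The main obstacle is that $X_0(a)$ may still be too large for naive brute force in the borderline cases, especially for the smallest values of $a$ (where $\epsilon,\delta$ and their conjugates are not well separated in size and $C(a)$ is close to $1$). For those cases the plan is to apply a Baker--Davenport reduction to the linear form in $\log|\epsilon^{(j)}|,\log|\delta^{(j)}|$ forced by $u_1^{(j)}\approx -u_2^{(j)}$ in the large embedding, which typically lowers $X_0(a)$ to a few dozen and makes the enumeration trivial. A secondary care point is to ensure that the sporadic solutions in Table~\ref{tab:sporadicSols} are genuinely inequivalent to one another and to the trivial families, which is settled by a direct check within the enumeration output.
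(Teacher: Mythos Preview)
The paper's proof is a one-line appeal to Magma's built-in \texttt{UnitEquation} routine over the maximal order, followed by filtering out trivial solutions and those not in $\Z[\rho]$. Your proposal is essentially to re-implement such a solver by hand for each $a\in\{-1,\dots,100\}$.

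The plan has a genuine gap at the ``key preparatory step''. From $|u_2^{(1)}|>C(a)^{X}$ and $|n|\leq 4$ you correctly deduce that $|u_1^{(1)}|>C(a)^{X}-4$ and hence, by $|N(u_1)|=1$, that $|u_1^{(2)}u_1^{(3)}|<(C(a)^{X}-4)^{-1}$. But pairing this with the ``trivial upper bounds'' $|u_1^{(j)}|\leq 4+|\eps^{(j)}|^{X}|\delta^{(j)}|^{X}$ yields no contradiction for large $X$: all of these inequalities are perfectly consistent as $X\to\infty$ (one conjugate is big, the product of the other two is small --- that is exactly what units do). So no ``one-variable inequality'' falls out, and you do not obtain any $X_0(a)$ this way. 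What is missing is a \emph{lower} bound for the linear form $|(x_1-x_2)\log\eps-(y_1-y_2)\log\delta|$, i.e.\ a Baker-type theorem on linear forms in logarithms; this is what produces the initial (huge) bound $X\leq X_0(a)$. Baker--Davenport, which you invoke only ``for the borderline cases'', is a reduction device that sharpens an already existing Baker bound --- it cannot manufacture the first bound on its own. This is precisely what the paper does for $a>100$ in Section~\ref{Sec:upper} via Laurent's theorem, and what Magma's \texttt{UnitEquation} does internally for each field.

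If you insert a Baker step (e.g.\ Laurent or Matveev applied with the numerical heights of $\eps,\delta$ for that particular $a$) before the Baker--Davenport reduction, the rest of your enumeration-and-bookkeeping plan is sound and would reproduce the paper's output; but at that point you have rebuilt the unit-equation solver, and the pragmatic route the paper takes --- just calling the existing one --- is both shorter and less error-prone.
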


\begin{proof}
The computations were done with Magma \cite{Magma}. The function \verb|UnitEquation| was used to solve the unit equation \eqref{eq:main} over the maximal order of $K_a$ for $-1 \leq a \leq 100$ and $1 \leq n\leq \max\{|a|^{1/3},1\}$. 
This took only a couple of seconds and revealed a total of $693$ solutions. 

Then the trivial solutions were filtered out as well as those not lying in $\Z[\rho]$. Thus 60 solutions remained and in each of them we had $n=1$. Up to equivalence they correspond to the $10$ sporadic solutions presented in Table~\ref{tab:sporadicSols}.
\end{proof}

In Table \ref{tab:sporadicSols} the sporadic solutions to Equation \eqref{eq:main} are represented in terms of $\rho$ as well as powers of fundamental units $\eps=\rho$ and $\delta=-\sigma(\rho)$.

\begin{table}[h]
\[
\begin{tabular}{ >{$}c<{$}  >{$}r<{$} @{\hspace{2pt}} >{$}l<{$} >{$}r<{$} @{\hspace{2pt}}
	>{$}l<{$}}
\hline
a & &u_1 & &u_2  \\	
\hline
-1 &
-\rho -1  & =  - \eps^{ 1 } \delta^{ 1 } &
\rho +2  & =   \eps^{ 0 } \delta^{ 2 } \\
&
-\rho^2+3  & =   \eps^{ -1 } \delta^{ 1 } &
\rho^2 -2  & =  - \eps^{ -1 } \delta^{ -1 } \\
&
-\rho^2+\rho +1  & =   \eps^{ 1 } \delta^{ -1 } &
\rho^2 -\rho   & =   \eps^{ 0 } \delta^{ -2 } \\
&
-549\rho^2 -305\rho +1234  & =  - \eps^{ -11 } \delta^{ -8 } &
549\rho^2 +305\rho -1233  & =   \eps^{ -8 } \delta^{ 3 } \\
&
-6\rho^2 -3\rho +14  & =   \eps^{ -3 } \delta^{ 1 } &
6\rho^2 +3\rho -13  & =   \eps^{ -4 } \delta^{ -3 } \\
&
-3\rho^2 -2\rho +7  & =  - \eps^{ -3 } \delta^{ -2 } &
3\rho^2 +2\rho -6  & =   \eps^{ -2 } \delta^{ 1 } \\
\hline
0 &
-21\rho^2 +7\rho +61  & =  - \eps^{ -5 } \delta^{ -2 } &
21\rho^2 -7\rho -60  & =   \eps^{ -2 } \delta^{ 3 } \\
&
-2\rho^2 +\rho +6  & =   \eps^{ -1 } \delta^{ 1 } &
2\rho^2 -\rho -5  & =   \eps^{ -2 } \delta^{ -1 } \\
\hline
1 &
-21\rho^2 +27\rho +77  & =   \eps^{ -1 } \delta^{ 3 } &
21\rho^2 -27\rho -76  & =   \eps^{ -4 } \delta^{ -1 } \\
\hline
2 &
-603\rho^2 +1340\rho +2718  & =   \eps^{ -1 } \delta^{ 5 } &
603\rho^2 -1340\rho -2717  & =   \eps^{ -6 } \delta^{ -1 } \\
\hline
\end{tabular}
\]
\caption{Sporadic solutions $(u_1,u_2,1)$}
\label{tab:sporadicSols}
\end{table}

From now on, we assume that $a> 100$.

\section{Further notations and finding a good solution in the equivalence class}\label{sec:embedding}

First, we note that $f_a(x)$ has three real roots $\rho=\rho_1,\rho_2,\rho_3$. Let us choose the ordering of the roots such that $\rho$ is the largest root,
$\rho_2=\sigma(\rho)=-1 - 1/\rho$ and $\rho_3=\sigma^2(\rho)=-1/(1+\rho)$.
Then we have the following estimates:
\begin{align*}
	a+1 < & \rho_1 < a+2,\\	
	-1 - \frac{1}{a+2} < & \rho_2 < -1 - \frac{1}{a+3},\\
	-\frac{1}{a+2} < & \rho_3 < -\frac{1}{a+3}.
\end{align*}
The estimates can easily be checked e.g. by inserting the upper and lower bounds in $f_a(x)$ and observing that the sign changes. 

Since $\rho_1\rho_2\rho_3=1$, the roots $\rho_1,\rho_2,\rho_3$ are indeed units in the maximal order $\OO_{K_a}$ of $K_a$.

Next, observe that $\sigma(\rho)=-\rho^2+a\rho + a+2 \in \Z[\rho]$, which implies $\Z[\rho]=\Z[\rho_2]=\Z[\rho_3]$. Thus $\rho_1,\rho_2,\rho_3$ are indeed units in the order $\Z[\rho]$. Moreover, Thomas \cite{Thomas1979} showed that
$\epsilon:=\rho_1$ and $\delta:=-\rho_2$ form a fundamental system of units of the order $\Z[\rho]$.

\medskip

Now let $(u_1,u_2,n)$ be a solution to Equation \eqref{eq:main}. In the following we will find a $\tau\in \Gal(K_a/\Q)$ such that $|\tau(u_2)|$ is ``large''.

 Since $\eps$ and $\delta$ are a fundamental system of units, we can write
\[
	u_1= \pm \eps^{x_1}\delta^{x_1}, \quad
	u_2= \pm \eps^{x_2}\delta^{y_2}.
\]
Consider the set of all solutions $(\bar u_1,\bar u_2, n)$ equivalent to $(u_1,u_2,n)$ and let $X$ be the maximum of all absolute values of exponents of $\epsilon$ and $\delta$ in $\bar u_1$ and $\bar u_2$. By choosing an appropriate solution equivalent to $(u_1,u_2,n)$ we may assume without loss of generality that
\[ 
	X=\max\{|x_2|, |y_2|\}.
\]
Next, we find a ``good'' $\tau\in \Gal(K_a/\Q)$. Therefore we prove first

\begin{lem}\label{lem:embedding}
There is a $\tau\in \Gal(K_a/\Q)$ such that 
$\tau(u_2)=\bar u_2=\pm \eps^{\bar{x}_2}\delta^{\bar{y}_2}$ 
and the exponents satisfy one of the following conditions:
\begin{enumerate}[label=(\alph*)]
\item \label{case:X-X}$\bar{x}_2=X$ and $\bar{y}_2\geq -X$,
\item \label{case:X2-X2}$\bar{x}_2\geq X/2$ and $\bar{y}_2\geq -X/2$.
\end{enumerate}
\end{lem}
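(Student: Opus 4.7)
The plan is first to compute how $\sigma$ acts on a monomial $\pm\eps^x\delta^y$, which turns the problem into an elementary combinatorial question about three integer pairs, and then to finish by a short case analysis.

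From $\sigma(\eps)=\sigma(\rho_1)=\rho_2=-\delta$ together with $\rho_1\rho_2\rho_3=1$ (the constant term of $f_a$ is $-1$), I obtain $\sigma(\delta)=-\sigma(\rho_2)=-\rho_3=(\eps\delta)^{-1}$. On exponent vectors, $\sigma$ therefore acts by the linear map $(x,y)\mapsto(-y,\,x-y)$ and $\sigma^2$ by $(x,y)\mapsto(y-x,\,-x)$. For $\tau\in\{\id,\sigma,\sigma^2\}$ this gives three candidates
\[
 T_0=(x_2,y_2),\quad T_1=(-y_2,\,x_2-y_2),\quad T_2=(y_2-x_2,\,-x_2).
\]
Write $a_i$ for the first coordinate of $T_i$. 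A direct check gives $a_0+a_1+a_2=0$, and the second coordinate of $T_i$ equals $-a_{i+1\bmod 3}$.

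Since every $T_i$ is the exponent pair of the $u_2$-slot of some solution equivalent to $(u_1,u_2,n)$, the maximality defining $X$ forces $|a_i|\leq X$ for $i=0,1,2$, while the choice of representative further yields $\max(|a_0|,|a_1|)=X$. In this notation, condition (a) reduces to the existence of some $i$ with $a_i=X$ (the bound $\bar y_2\geq -X$ is then automatic), and condition (b) reduces to the existence of some $i$ with $a_i\geq X/2$ and $a_{i+1\bmod 3}\leq X/2$.

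The argument then splits into two cases. If $\max_i a_i=X$, then (a) holds for the corresponding index. Otherwise every $a_i<X$; combined with $\max(|a_0|,|a_1|)=X$ this forces $a_0=-X$ or $a_1=-X$. In either subcase the remaining two $a_j$'s sum to $X$ by $a_0+a_1+a_2=0$, and the constraint $|a_j|\leq X$ places both of them in $[0,X]$. Hence one of them lies in $[X/2,X]$ while the other lies in $[0,X/2]$, so choosing the corresponding $\tau$ yields (b). The only delicate step is the computation of $\sigma(\delta)$, after which the proof reduces to bookkeeping organized by the identity $a_0+a_1+a_2=0$.
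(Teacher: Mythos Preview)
Your proof is correct. The underlying approach is the same as the paper's: compute how $\sigma$ and $\sigma^2$ act on the exponent pair $(x_2,y_2)$ and then run a case analysis to locate the right $\tau$. Your computation $(x,y)\mapsto(-y,x-y)$ under $\sigma$ matches the paper's in-line computations exactly.

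Where you differ is in the bookkeeping. The paper splits into four cases according to which of $\pm x_2,\pm y_2$ equals $X$, with two of those cases splitting further. You instead record the three first coordinates $a_0=x_2$, $a_1=-y_2$, $a_2=y_2-x_2$, observe the identity $a_0+a_1+a_2=0$ together with the fact that the second coordinate of $T_i$ is $-a_{i+1}$, and then split only on whether $\max_i a_i=X$. When it is, condition~(a) is immediate; when it is not, the sum-zero identity forces the remaining two $a_j$'s into $[0,X]$ with sum $X$, and the cyclic relation between first and second coordinates lets you read off condition~(b) directly. This is a tidier organisation of the same computation: the paper's Cases~1 and~4 collapse into your first branch, and Cases~2 and~3 into your second. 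The gain is conceptual clarity (the sum-zero relation explains \emph{why} the case analysis closes); the paper's version has the minor advantage of being completely explicit about which $\tau$ to pick in each subcase, which you leave slightly implicit in the final sentence.
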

\begin{proof}
We distinguish between several cases, according to the size of the exponents $x_2$ and~$y_2$. 

\caselI{Case 1:} $X=x_2$. Then condition \ref{case:X-X} is immediately satisfied with $\tau=\id_{K_a}$.

\caselI{Case 2:} $X=-x_2$. 
We distinguish between two subcases.

\caselII{Case 2.1:} $y_2 \geq -X/2$. 
If we choose $\tau=\sigma^2$, then we have
\[
	|\tau(u_2)|
	=|\rho_3|^{x_2}|\rho_1|^{y_2}
	=|\rho_1 \rho_2|^{-x_2}|\rho_1|^{y_2}
	=\eps^{-x_2+y_2} \delta^{-x_2}
	=\eps^{\bar{x}_2}\delta^{\bar{y}_2}.
\]
Thus condition \ref{case:X2-X2} is satisfied:
\begin{align*}
	\bar{x}_2
	&=-x_2 + y_2 
	\geq X - X/2 
	= X/2,\\
	\bar{y}_2
	&= -x_2 = X \geq -X/2.
\end{align*}

\caselII{Case 2.2:} $y_2 < - X/2$. In this case we choose $\tau=\sigma$. Then we have
\[
	|\tau(u_2)|
	= |\rho_2|^{x_2} |\rho_3|^{y_2}
	= |\rho_2|^{x_2} |\rho_1\rho_2|^{-y_2}
	= \eps^{-y_2} \delta^{-y_2+x_2}
	=\eps^{\bar{x}_2}\delta^{\bar{y}_2}
\]
and condition \ref{case:X2-X2} is satisfied:
\begin{align*}
	\bar{x}_2
	&=-y_2
	> X/2,\\
	\bar{y}_2
	&= -y_2 + x_2
	> X/2 - X
	= -X/2.
\end{align*}

\caselI{Case 3:} $X=y_2$. Again, we distinguish between two subcases.

\caselII{Case 3.1:}
$x_2 \geq X/2$. Then condition \ref{case:X2-X2} is immediately satisfied with $\tau=\id_{K_a}$.

\caselII{Case 3.2:} 
$x_2 < X/2$. 
If we take $\tau=\sigma^2$, then as in Case 2.1 we have 
\[
	|\tau(u_2)|
	=\eps^{-x_2+y_2} \delta^{-x_2}
	=\eps^{\bar{x}_2}\delta^{\bar{y}_2}
\]
and condition \ref{case:X2-X2} is satisfied:
\begin{align*}
	\bar{x}_2
	&=-x_2 + y_2 
	> -X/2 + X
	= X/2,\\
	\bar{y}_2
	&= -x_2
	> -X/2.
\end{align*}

\caselI{Case 4:} $X=-y_2$. We choose $\tau=\sigma$. Then as in Case~2.2 we have 
\[
	|\tau(u_2)|
	= \eps^{-y_2} \delta^{-y_2+x_2}
	=\eps^{\bar{x}_2}\delta^{\bar{y}_2}
\]
and condition \ref{case:X-X} is satisfied:
\begin{align*}
	\bar{x}_2
	&=-y_2
	=X,\\
	\bar{y}_2
	&= -y_2 + x_2
	\geq X - X = 0 \geq - X.
\end{align*}
\end{proof}

Now we can prove that $|\bar u_2|$ is ``large''.


\begin{lem}\label{lem:u2gross}
Let $\bar u_2 = \tau (u_2)$ be as stated in Lemma \ref{lem:embedding}. Then we have
\[
	|\bar u_2|>a^{X/2}.
\]
\end{lem}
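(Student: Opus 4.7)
The plan is to compute $|\bar{u}_2|$ directly from the representation $\bar{u}_2 = \pm \eps^{\bar{x}_2}\delta^{\bar{y}_2}$ and to lower-bound it using the size estimates on $\rho_1$ and $\rho_2$ listed at the start of this section. Since $\eps = \rho_1 > 0$ and $\delta = -\rho_2 > 0$, we simply have $|\bar{u}_2| = \eps^{\bar{x}_2}\delta^{\bar{y}_2}$; and because both $\eps,\delta > 1$, this product is monotonically increasing in each exponent. Thus to obtain a lower bound I would plug in the smallest allowed values of $\bar{x}_2$ and $\bar{y}_2$ coming from Lemma~\ref{lem:embedding}.

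The key quantitative input to extract from the given estimates is the ratio bound
\[
    \frac{\eps}{\delta} > \frac{a+1}{1 + 1/(a+2)} = \frac{(a+1)(a+2)}{a+3} = a + \frac{2}{a+3} > a,
\]
which follows from $\eps > a+1$ and $\delta < 1 + 1/(a+2)$. This is the heart of the argument: it says that $\eps$ is so much larger than $\delta$ that even after compensating by the worst-case negative power of $\delta$ allowed by Lemma~\ref{lem:embedding}, what remains still grows like a power of~$a$.

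With this in hand I would split into the two cases of Lemma~\ref{lem:embedding}. In case \ref{case:X2-X2}, where $\bar{x}_2 \geq X/2$ and $\bar{y}_2 \geq -X/2$, monotonicity immediately yields
\[
    |\bar{u}_2| \geq \eps^{X/2}\delta^{-X/2} = (\eps/\delta)^{X/2} > a^{X/2},
\]
which is exactly what is required. In case \ref{case:X-X}, with $\bar{x}_2 = X$ and $\bar{y}_2 \geq -X$, the same reasoning gives
\[
    |\bar{u}_2| \geq \eps^{X}\delta^{-X} = (\eps/\delta)^{X} > a^{X} \geq a^{X/2},
\]
which is an even stronger bound.

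I do not expect a real obstacle here. Once the ratio estimate $\eps/\delta > a$ is established the rest is bookkeeping with the exponent bounds from Lemma~\ref{lem:embedding}. The only subtlety is that $\delta$ is only barely larger than~$1$, so one might worry that negative exponents of $\delta$ could cancel the growth coming from~$\eps$; the point of the ratio estimate is precisely to rule this out, since the two ``bad'' factors $\eps$ and $\delta^{-1}$ still combine into something of order~$a$.
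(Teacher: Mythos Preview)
Your argument is correct and is essentially the same as the paper's: both split into the two cases of Lemma~\ref{lem:embedding}, use the bounds $\eps>a+1$ and $\delta<1+\tfrac{1}{a+2}$ together with the identity $\tfrac{(a+1)(a+2)}{a+3}>a$, and conclude $|\bar u_2|>a^{X}$ in case~\ref{case:X-X} and $|\bar u_2|>a^{X/2}$ in case~\ref{case:X2-X2}. The only cosmetic difference is that you isolate the ratio bound $\eps/\delta>a$ first and then apply it, whereas the paper carries out the same computation inline in each case.
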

\begin{proof}
First assume that condition \ref{case:X-X} of Lemma \ref{lem:embedding} is satisfied, i.e. $\bar x_2=X$ and $\bar y_2 \geq -X$. Then we have
\[
	|\bar u_2|
	=\eps^{\bar x_2}\delta^{\bar y_2}
	> (a+1)^{X}\left(1+\frac{1}{a+2}\right)^{-X}
	= \left( \frac{(a+1)(a+2)}{a+3}\right)^{X}
	> a^{X}.
\]
Now assume that condition \ref{case:X2-X2} is satisfied, i.e. $\bar x_2\geq X/2$ and $\bar y_2\geq -X/2$. Then we have
\[
	|\bar u_2| 
	= \eps^{\bar x_2}\delta^{\bar y_2}
	> (a+1)^{X/2} \left(1+\frac{1}{a+2}\right)^{-X/2}
	= \left( \frac{(a+1)(a+2)}{a+3}\right)^{X/2}
	> a^{X/2}.
\]

\end{proof}

Now consider the solution $(\bar u_1 , \bar u_2, n)= (\tau(u_1),\tau(u_2),n)=(\pm \eps^{\bar x_1} \delta^{\bar y_1}, \pm \eps^{\bar x_2} \delta^{\bar y_2},n)$.
Since $\max\{|\bar x_1|,|\bar x_2|,|\bar y_1|,|\bar y_2|\}\leq X$ we immediately obtain the following proposition. (Note that after a possible change of signs we may assume $\bar u_1$ to be positive.)

\begin{prop}\label{prop:good}
Every solution to \eqref{eq:main} is equivalent to a solution $(u_1,u_2,n)$ satisfying
$|u_2|>a^{X/2}$, where $X=\max\{|x_1|,|x_2|,|y_1|,|y_2|\}$ and
$u_1=\eps^{x_1}\delta^{y_1}$, $u_2=\pm \eps^{x_2}\delta^{y_2}$.
\end{prop}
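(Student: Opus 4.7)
The proposition is essentially a bookkeeping consequence of Lemmas~\ref{lem:embedding} and~\ref{lem:u2gross} combined with the definition of $X$. My plan is as follows.

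Start with an arbitrary solution $(u_1,u_2,n)$ to \eqref{eq:main}. Recall that $X$ was defined as the maximum of the absolute values of all four exponents taken over \emph{all} solutions in the equivalence class of $(u_1,u_2,n)$, so $X$ is an invariant of the class. Since permuting $u_1$ and $u_2$ is one of the equivalence operations, I may replace the given solution by an equivalent one for which this maximum is realised on the second component, i.e.\ $X=\max\{|x_2|,|y_2|\}$. This is exactly the setup that Lemma~\ref{lem:embedding} assumes.

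Next, apply Lemma~\ref{lem:embedding} to produce $\tau\in\Gal(K_a/\Q)$ such that $\bar u_2:=\tau(u_2)=\pm\eps^{\bar x_2}\delta^{\bar y_2}$ satisfies condition \ref{case:X-X} or \ref{case:X2-X2}. Setting $\bar u_1:=\tau(u_1)$, the triple $(\bar u_1,\bar u_2,n)$ is obtained from $(u_1,u_2,n)$ by applying a Galois automorphism, hence is equivalent to it. In particular, the exponents of $\bar u_1$ and $\bar u_2$ with respect to $\eps,\delta$ lie in the same equivalence class, so
\[
\max\{|\bar x_1|,|\bar x_2|,|\bar y_1|,|\bar y_2|\}\le X,
\]
by the very definition of $X$.

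Lemma~\ref{lem:u2gross} now gives $|\bar u_2|>a^{X/2}$ in either case \ref{case:X-X} or \ref{case:X2-X2}. Finally, if $\bar u_1<0$, replace $(\bar u_1,\bar u_2,n)$ by $(-\bar u_1,-\bar u_2,-n)$, which is another equivalence operation and does not change $|\bar u_2|$ nor the absolute values of the exponents. After this last adjustment $\bar u_1=\eps^{x_1}\delta^{y_1}$ is positive and $\bar u_2=\pm\eps^{x_2}\delta^{y_2}$ has $|\bar u_2|>a^{X/2}$, giving exactly the form claimed. The only thing to be careful about is keeping track of the fact that all three operations used (swap, application of $\tau$, and overall sign change) preserve the equivalence class and therefore preserve $X$; there is no substantive obstacle beyond that.
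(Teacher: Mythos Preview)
Your argument is correct and follows essentially the same route as the paper: swap to put the maximal exponent on $u_2$, apply Lemma~\ref{lem:embedding} to choose $\tau$, invoke Lemma~\ref{lem:u2gross} for the bound $|\bar u_2|>a^{X/2}$, and finally flip signs if needed to make $\bar u_1$ positive. The paper likewise notes that $\max\{|\bar x_1|,|\bar x_2|,|\bar y_1|,|\bar y_2|\}\le X$ because $X$ was defined as the maximum over the whole equivalence class.
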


From now on, we will only consider the solutions from Proposition \ref{prop:good}, i.e. we will always assume without loss of generality that a solution  $(u_1,u_2,n)$ is of the form $u_1=\eps^{x_1}\delta^{y_1}$, $u_2=\pm \eps^{x_2}\delta^{y_2}$ and that $|u_2|>a^{X/2}$. Moreover, $X$ shall denote the maximum of the exponents $X=\max\{|x_1|,|x_2|,|y_1|,|y_2|\}$.

\section{Some special cases}\label{sec:specialCases}

In this section we treat the special case $x_1=x_2$, finding the trivial solutions to Equation~\eqref{eq:main}.

\begin{lem}\label{lem:einfacheL}
Let $(u_1,u_2,n)=(\eps^{x_1}\delta^{y_1}, \pm \eps^{x_2}\delta^{y_2},n)$ be a solution to \eqref{eq:main} with $x_1=x_2$ and $y_1-y_2 \in \{0,\pm 1\}$. Then $(u_1,u_2,n)$ is a trivial solution.
\end{lem}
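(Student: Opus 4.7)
My strategy rests on the two identities
\[
	\delta - 1 = -\rho_2 - 1 = 1/\rho = \eps^{-1},
	\qquad
	1 - \delta = -\eps^{-1},
\]
which exhibit $\delta - 1$ as a unit, in sharp contrast to $\delta + 1 = 1 - \rho_2$, whose Galois conjugates $1 - \rho_1, 1 - \rho_2, 1 - \rho_3$ have product
\[
	N(\delta + 1) = f_a(1) = -(2a+3).
\]
Since $n \in \Z$ with $|n| \leq a^{1/3}$, we have $|N(n)| = |n|^3 \leq a$. Thus any equation of the shape $u_1 + u_2 = (\text{unit})\cdot(\delta + 1) = n$ would force $|n|^3 = 2a + 3 > a$, a contradiction for $a > 100$. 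The plan is therefore to rewrite $u_1 + u_2$ in each sub-case as a unit factor times $\delta \pm 1$ or $1 \pm \delta$, and then either invoke this norm obstruction or use that the remaining factor is itself a unit.

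I would split into cases according to $d := y_1 - y_2 \in \{-1, 0, 1\}$ and the sign of $u_2 = \pm\eps^{x_1}\delta^{y_2}$. For $d = 0$ one has $u_2 = \pm u_1$: the minus sign immediately gives $n = 0$, while the plus sign gives $n = 2u_1$, so $u_1 \in \Q \cap \Z[\rho]^* = \{1\}$ (positivity of $\eps,\delta$ rules out $-1$); both outcomes lie in \eqref{eq:trivial1}. For $d = 1$ we have $u_1 + u_2 = \eps^{x_1}\delta^{y_2}(\delta \pm 1)$: the plus sign is killed by the norm estimate above, while the minus sign yields $u_1 + u_2 = \eps^{x_1 - 1}\delta^{y_2}$, a positive unit equal to $n \in \Z$; this forces $n = 1$ and, by multiplicative independence of $\eps$ and $\delta$, also $x_1 = 1$ and $y_2 = 0$. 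The resulting solution is precisely $(\rho+1, -\rho, 1)$ from \eqref{eq:trivial2}. The case $d = -1$ is mirror-symmetric, producing either the same norm contradiction or the equivalent solution $(\rho, -(\rho+1), -1)$, obtained from $(\rho+1,-\rho,1)$ by sign change and swapping $u_1,u_2$.

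I do not anticipate any serious obstacle: the argument reduces to six sign/exponent sub-cases, each resolved either by a one-line norm comparison or by recognising that a positive unit equal to a rational integer must be $1$. The whole lemma rests on the elementary identity $\delta - 1 = \eps^{-1}$ together with the linear growth $|f_a(1)| = 2a+3$, against which the hypothesis $|n|^3 \leq a$ has no chance for $a > 100$.
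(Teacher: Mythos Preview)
Your proof is correct and follows essentially the same route as the paper: both split into the cases $y_1-y_2\in\{0,\pm 1\}$, dispose of the $\delta+1$ factor via the norm identity $N(\delta+1)=f_a(1)=-(2a+3)$ against $|n|^3\le a$, and recognise the $\delta-1$ factor as $\eps^{-1}$ (the paper phrases this as $1/(\delta-1)=\eps$) to pin down the trivial solution $(\rho+1,-\rho,1)$. Your write-up is slightly more explicit about the key identity $\delta-1=\eps^{-1}$ and about the $d=-1$ case, but the argument is the same.
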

\begin{proof}
First, assume that $x_1=x_2$ and $y_1-y_2=0$. Then we have $u_1=\pm u_2$. 
If $u_1 = -u_2$, then we have the trivial solution $(u_1, -u_1 ,0)$.
If $u_1= u_2$, then we have $2 \eps^{x_1} \delta^{y_1} = n$, which is only possible if $x_1=y_1=0$, i.e. $u_1=u_2=1$ and $n=2$.

Now consider the case that $x_1=x_2$ and $y_1-y_2=1$. Then we have 
\[
	\eps^{x_1}\delta^{y_2+1} \pm \eps^{x_1}\delta^{y_2}
	=\eps^{x_1}\delta^{y_2}(\delta \pm 1)
	=n.
\]  
On the one hand, one can check that $N_{K_a/\Q}(\delta+1)=-2a-3$. Since $\eps$ and $\delta$ are units and $n\in \Z$ with $|n|\leq a^{1/3}$, we obtain a contradiction considering the norm of the above equation:
\[
	2a+3 
	=|N_{K_a/\Q}(u_1+u_2)|
	=|N_{K_a/\Q}(n)|
	=|n|^3
	\leq a.
\]
On the other hand, $N_{K_a/\Q}(\delta -1)=1$ and there is a solution for $n=1$:
We have	$\eps^{x_1}\delta^{y_2}(\delta-1)=1$ if and only if $\eps^{x_1}\delta^{y_2}=1/(\delta-1)=\eps $, i.e. $x_1=x_2=1$, $y_1=1$ and $y_2=0$. Thus we have $u_1= \eps \delta=\eps + 1$ and $u_2= -\eps$, which is the trivial solution $(\rho+1,-\rho,1)$.

The case $x_1=x_2$ and $y_1-y_2=-1$ is analogous to the above case.
\end{proof}

In the more general cases we will also argue via the norm, using the following lemma.

\begin{lem}\label{lem:norm}
For $x\in \Z\setminus\{0,\pm1 \}$ we have
\[
	|N_{K_a/\Q}(\eps^x \pm 1)|>a.
\]
\end{lem}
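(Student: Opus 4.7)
The plan is to express the norm as the product
\[
N_{K_a/\Q}(\eps^x \pm 1) = (\rho_1^x \pm 1)(\rho_2^x \pm 1)(\rho_3^x \pm 1)
\]
over the three real conjugates of $\eps=\rho_1$, and bound each factor separately using the estimates on $\rho_1,\rho_2,\rho_3$ recorded at the start of Section~\ref{sec:embedding}. First I would exploit the relation $N_{K_a/\Q}(\eps)=\rho_1\rho_2\rho_3=1$ to reduce to $x\geq 2$: for $x\leq -2$ one writes $\eps^x\pm 1 = \pm\eps^x(1\pm\eps^{|x|})$ and takes norms, obtaining $|N_{K_a/\Q}(\eps^x\pm 1)| = |N_{K_a/\Q}(\eps^{|x|}\pm 1)|$, so only $x\geq 2$ needs attention.

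For $x\geq 2$ the outer two factors are essentially free. Since $\rho_1>a+1$, we have $|\rho_1^x\pm 1|\geq (a+1)^x - 1 \geq a(a+2)$; and since $|\rho_3|<1/(a+2)$, we have $|\rho_3^x\pm 1|\geq 1 - 1/(a+2)^x \geq 1 - 1/(a+2)^2$, which is close to $1$.

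The middle factor $|\rho_2^x\pm 1|$ is the main obstacle, because $|\rho_2|$ is only slightly larger than $1$ and hence $|\rho_2|^x$ can lie very close to $1$. I would handle it by cases on the sign of $\rho_2^x$ relative to the $\pm 1$ term: when the signs disagree one has trivially $|\rho_2^x\pm 1|>2$, while when they agree $|\rho_2^x\pm 1|=|\rho_2|^x-1$, and Bernoulli applied to $|\rho_2|>1+1/(a+3)$ gives
\[
|\rho_2|^x - 1 \geq \frac{x}{a+3} \geq \frac{2}{a+3}.
\]
Either way $|\rho_2^x\pm 1|\geq 2/(a+3)$. Multiplying the three bounds and using $(a+2)^2-1=(a+1)(a+3)$ collapses everything to $|N_{K_a/\Q}(\eps^x\pm 1)|\geq 2a(a+1)/(a+2)$, which exceeds $a$ for every $a\geq 1$. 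The single delicate point is precisely the Bernoulli estimate for the $\rho_2$ factor; once past that, the computation is routine.
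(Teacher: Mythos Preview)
Your argument is correct and follows essentially the same plan as the paper: write the norm as the product of the three real conjugate factors and bound each using the estimates on $\rho_1,\rho_2,\rho_3$. Two small differences are worth noting. First, you reduce the case $x\leq -2$ to $x\geq 2$ via $N_{K_a/\Q}(\eps)=1$, which is a tidy shortcut; the paper instead repeats the direct estimate for negative $x$. Second, for the middle factor you invoke Bernoulli to get $|\rho_2|^x-1\geq 2/(a+3)$, whereas the paper simply uses monotonicity in $x$ (since $|\rho_2|>1$) to reduce to $x=2$ and computes $(1+\tfrac{1}{a+3})^2-1=\tfrac{2a+7}{(a+3)^2}$; the latter is marginally sharper and avoids the case split on signs, but your version works just as well.
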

\begin{proof}
First, assume that $x\geq 2$. Then
\begin{align*}
	|N_{K_a/\Q}(\eps^x \pm 1)|
	&= |\rho_1^x \pm 1|\cdot|\rho_2^x \pm 1|\cdot|\rho_3^x \pm 1|\\
	&> ((a+1)^2 - 1) \left( \left( 1 + \frac{1}{a+3}\right)^2 - 1 \right)
		\left( 1- \left( \frac{1}{a+2}\right)^2 \right)\\
	&=\frac{a(1+a)(7+2a)}{(2+a)(3+a)}
	> a.
\end{align*}
For $x\leq -2$ we have
\begin{align*}
	|N_{K_a/\Q}(\eps^x \pm 1)|
	&> (1- (a+1)^{-2}) \left( 1 - \left( 1 + \frac{1}{a+3}\right)^{-2} \right)
		\left( \left( \frac{1}{a+2}\right)^{-2} - 1 \right)\\
	&=\frac{a(a+2)(a+3)(2a+7)}{(a+1)(a+4)^2}
	> a.
\end{align*}
\end{proof}

Now we can prove

\begin{prop}\label{prop:specialCases}
Any solution $(u_1,u_2,n)=(\eps^{x_1}\delta^{y_1}, \pm \eps^{x_2}\delta^{y_2},n)$ to Equation \eqref{eq:main} with $x_1=x_2$ is trivial.
\end{prop}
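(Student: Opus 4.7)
The plan is to reduce Proposition~\ref{prop:specialCases} to the situation already handled by Lemma~\ref{lem:einfacheL} by controlling $y_1-y_2$ via a norm computation. First I would factor out the common power of $\eps$: writing $u_1+u_2 = \eps^{x_1}(\delta^{y_1}\pm \delta^{y_2})$ and (after possibly swapping $u_1$ and $u_2$, which only changes signs) assuming $y_1\geq y_2$, set $d := y_1-y_2\geq 0$ and obtain
\[
n = \eps^{x_1}\delta^{y_2}\bigl(\delta^{d}\pm 1\bigr).
\]
Taking the field norm and using that $\eps$ and $\delta$ are units (hence $|N_{K_a/\Q}(\eps^{x_1}\delta^{y_2})|=1$) gives the key identity
\[
|n|^{3} \;=\; \bigl|N_{K_a/\Q}(\delta^{d}\pm 1)\bigr|.
\]

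Next I would extend Lemma~\ref{lem:norm} from $\eps$ to $\delta$. Since $\delta=-\rho_2=-\sigma(\eps)$, we have $\sigma^{-1}(\delta)=-\eps$, and because Galois automorphisms preserve the norm,
\[
\bigl|N_{K_a/\Q}(\delta^{d}\pm 1)\bigr|
 = \bigl|N_{K_a/\Q}((-\eps)^{d}\pm 1)\bigr|
 = \bigl|N_{K_a/\Q}(\eps^{d}\pm 1)\bigr|,
\]
where the sign on the right may differ from the one on the left depending on the parity of $d$. Consequently Lemma~\ref{lem:norm} applies verbatim with $\delta$ in place of $\eps$: for every $d\in\Z\setminus\{0,\pm 1\}$ one has $|N_{K_a/\Q}(\delta^{d}\pm 1)|>a$.

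Combining this with the identity above yields $|n|^{3}>a$ whenever $d\geq 2$, contradicting the hypothesis $|n|\leq a^{1/3}$ (which is in force since $a>100$). Hence the only possibility is $d\in\{0,1\}$, that is, $y_1-y_2\in\{0,\pm 1\}$, and Lemma~\ref{lem:einfacheL} immediately forces $(u_1,u_2,n)$ to be trivial.

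The only real point requiring care is the transfer of Lemma~\ref{lem:norm} from $\eps$ to $\delta$, which is why I would make the reduction $\sigma^{-1}(\delta)=-\eps$ explicit; once this is established, the proposition follows by a clean norm-size comparison together with the already proven Lemma~\ref{lem:einfacheL}. The boundary situation $d=0$ with the $+$ sign yielding $2u_1=n$ forces $u_1=\pm 1$ (units in $\Z[\rho]\cap\Q$) and thus the trivial solution $(1,1,2)$, consistent with Lemma~\ref{lem:einfacheL}.
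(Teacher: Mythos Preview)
Your argument is correct and follows essentially the same route as the paper: factor out $\eps^{x_1}\delta^{y_2}$, take norms, and invoke Lemma~\ref{lem:norm} to force $y_1-y_2\in\{0,\pm 1\}$, after which Lemma~\ref{lem:einfacheL} finishes. The one place where you are more careful than the paper is in justifying why Lemma~\ref{lem:norm}, stated for $\eps$, also yields $|N_{K_a/\Q}(\delta^{d}\pm 1)|>a$; the paper applies it to $\delta$ without comment, whereas you make the Galois transfer $\sigma^{-1}(\delta)=-\eps$ explicit, which is a genuine improvement in clarity.
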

\begin{proof}
By Lemma \ref{lem:einfacheL} we only need to check the cases where $x_1=x_2$ and $y_1-y_2 \notin\{0,\pm 1\}$.
Under these assumptions Equation \eqref{eq:main} becomes
\[
	n
	= \eps^{x_1} \delta^{y_1} \pm \eps^{x_1} \delta^{y_2}
	= \eps^{x_1} \delta^{y_2} (\delta^{y_1-y_2} \pm 1).
\]
Considering the norm, we obtain a contradiction with Lemma \ref{lem:norm}:
\[
	a 
	\geq |n|^3 
	=| N_{K_a/\Q}(\delta^{y_1-y_2} \pm 1)|
	>a.
\]
\end{proof}

From now on, we assume that $x_1\neq x_2$ and in particular $X\geq 1$.

\section{Obtaining an upper bound for $X$}\label{Sec:upper}

In this section we use a lower bound for linear forms in logarithms by  Laurent \cite[Cor. 2]{Laurent2008} in order to obtain a bound of the form $X\ll \log a \,(\log \log a)^2$. To state Laurent's result we have to introduce the notion of height. Let $\alpha \neq 0$ be an algebraic number of degree $d$ and let
\[
	a_0(x-\alpha_1)\cdots (x-\alpha_d) \in \Z[x]
\]
be the minimal polynomial of $\alpha$. Then the absolute logarithmic Weil height is defined by
$$h(\alpha):=\frac 1d \left(\log |a_0|+\sum_{i=1}^d \max\{0,\log|\alpha_i|\}\right).$$ 
With this notation we have

\begin{knownthm}[Laurent]\label{thm:Laurent}
Let $\alpha_1, \alpha_2 \geq 1$ be two multiplicatively independent real algebraic numbers, let $b_1$ and $b_2$ be two positive integers and let 
\[
	\Lambda := b_2 \log \alpha_2 - b_1 \log \alpha_1.
\]
Then
\[
	\log |\Lambda|
	\geq - 17.9 D^4 (\max\{ \log b' + 0.38, 30/D, 1\})^2 \log A_1 \log A_2, 
\]
where
\begin{align*}
	D&=[\Q(\alpha_1,\alpha_2):\Q],\\
	b'&=\frac{b_1}{D \log A_2} + \frac{b_2}{D \log A_1},\\
	\log A_i &\geq \max \{ h(\alpha_i),|\log \alpha_i|/D,1/D\}
		\qquad (i=1,2).
\end{align*}
\end{knownthm}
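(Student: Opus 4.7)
The final statement is Theorem~A (Laurent), which is cited verbatim from \cite[Cor.~2]{Laurent2008} as a known result; a genuinely new proof is out of scope here, so the plan is only to sketch the shape of Laurent's original argument. This estimate belongs to the theory of linear forms in two logarithms, where the sharpest known bounds are obtained by the \emph{interpolation determinant method}, a refinement of Baker's auxiliary-function technique due to Laurent.

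The plan is to argue by contradiction: suppose $|\Lambda|$ violates the claimed bound. Choose integer parameters $L_1, L_2, T$, to be optimised later in terms of $D$, $b'$, $\log A_1$, $\log A_2$. Form the $L\times L$ matrix $M$ with $L = (L_1+1)(L_2+1) = T+1$ and entries
\[
M_{(\lambda_1,\lambda_2),\, t} \;=\; \alpha_1^{\lambda_1 t}\,\alpha_2^{\lambda_2 t},
\]
indexed by a rectangular grid $(\lambda_1,\lambda_2)\in\{0,\dots,L_1\}\times\{0,\dots,L_2\}$ and $t\in\{0,1,\dots,T\}$, and let $\Delta=\det M$.

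Three estimates for $\Delta$ then clash and force a contradiction. Non-vanishing: the rows of $M$ are linearly independent because the exponents $\lambda_1 b_2 - \lambda_2 b_1$ are generically distinct, which one proves from the multiplicative independence of $\alpha_1,\alpha_2$ together with a combinatorial counting (a zero-lemma). Analytic upper bound: the hypothesis $\alpha_1^{b_1}\approx \alpha_2^{b_2}$ makes nearby columns of $M$ nearly proportional, so after a change of variables Hermite-type interpolation on entire functions of a single complex variable yields $\log|\Delta|\le -c_1 T^2(L_1\log\alpha_1+L_2\log\alpha_2)+c_2 LT\log|\Lambda|^{-1}$. Arithmetic lower bound: $\Delta$ is a non-zero algebraic integer up to a controlled denominator, so the product formula (Liouville's inequality) gives $\log|\Delta|\ge -c_3 LT (L_1 h(\alpha_1)+L_2 h(\alpha_2))$. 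Balancing these after choosing $L_1,L_2,T$ in proportion to $\log A_2,\log A_1$ and $b'$ delivers the claimed lower bound on $\log|\Lambda|$.

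The hardest part is exactly this balancing: extracting the explicit constant $17.9$ requires sharp Schwarz-type estimates, Fel'dman-type polynomials to amplify divisibilities, and a meticulous bookkeeping of the exceptional cases embedded in the $\max$ defining $b'$ and $\log A_i$. For the present paper none of this matters, since only the \emph{order of magnitude} of the bound is needed, and it will feed directly into the estimate $X\ll \log a\,(\log\log a)^2$ to be established in Section~\ref{Sec:upper}.
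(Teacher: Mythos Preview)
Your assessment is correct: the paper does not prove Theorem~A at all. It is stated as a \texttt{knownthm} and simply cited from \cite[Cor.~2]{Laurent2008}, then applied as a black box in the proof of Proposition~\ref{prop:upperBound}. There is nothing to compare against, and your decision not to attempt a self-contained proof matches exactly what the paper does.

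Your added sketch of Laurent's interpolation determinant method is a reasonable high-level outline of the original argument and goes well beyond what the paper provides. One small caution if you keep it: the matrix you write down is a plain Vandermonde-type matrix in the monomials $\alpha_1^{\lambda_1 t}\alpha_2^{\lambda_2 t}$, whereas Laurent's actual construction involves derivatives (or equivalently, binomial-type weights) to get the interpolation with multiplicities that drives the analytic estimate; and the ``non-vanishing'' step is handled by a zero estimate rather than by distinctness of exponents alone. These are minor inaccuracies in a sketch you explicitly label as such, and they do not affect anything in the present paper, since only the final inequality is used.
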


\begin{prop}\label{prop:upperBound}
Let $a > 100$ and $(u_1,u_2,n)$ be a solution with the properties from Proposition~\ref{prop:good} and $x_1 \neq x_2$. 
Then for the maximum of exponents $X$ we have the upper bound
\begin{equation}\label{eq:upperBound}
	X < 343 \log a \cdot (10 +  1.7 \log \log a)^2.
\end{equation} 
\end{prop}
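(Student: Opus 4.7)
The plan is to extract a ``small'' linear form in two logarithms from $u_1 + u_2 = n$ and then invoke Laurent's bound (Theorem \ref{thm:Laurent}). Since $|u_2| > a^{X/2} \geq a^{1/2} > a^{1/3} \geq |n|$, the two summands $u_1, u_2$ must have opposite signs; because $u_1 > 0$ by Proposition \ref{prop:good}, this forces the sign of $u_2$ to be negative, so $u_2 = -\eps^{x_2}\delta^{y_2}$. Dividing $u_1 + u_2 = n$ by $\eps^{x_2}\delta^{y_2}$ yields
$$\bigl|\eps^{x_1-x_2}\delta^{y_1-y_2} - 1\bigr| = \frac{|n|}{|u_2|} < a^{1/3 - X/2}.$$
Setting $\Lambda := (x_1-x_2)\log\eps + (y_1-y_2)\log\delta$ and using the standard estimate $|\Lambda| \leq 2|e^\Lambda - 1|$ (which applies once $X$ is large enough that the right-hand side above is below $1/2$, easily guaranteed for $a > 100$), we obtain $|\Lambda| < 2\, a^{1/3 - X/2}$.

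Next I check that $\Lambda \neq 0$ and can legitimately be written as $b_2\log\alpha_2 - b_1\log\alpha_1$ with $b_1, b_2$ positive integers. Multiplicative independence of $\eps$ and $\delta$ together with $x_1 \neq x_2$ gives $\Lambda \neq 0$; moreover, if $x_1-x_2$ and $y_1-y_2$ had the same sign (or if $y_1 = y_2$), then $|\Lambda| \geq \log\eps > \log(a+1)$, which contradicts the above upper bound for any $X \geq 2$ and $a > 100$. So the two coefficients have opposite signs, and we may set $\{\alpha_1, \alpha_2\} = \{\eps, \delta\}$ with $b_1, b_2 \in \{1, \ldots, 2X\}$. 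For the heights: since $\rho_1, \rho_2, \rho_3$ are algebraic integers with $|\rho_1|, |\rho_2| > 1 > |\rho_3|$ and $\rho_1\rho_2\rho_3 = 1$,
$$h(\eps) = h(\delta) = \tfrac{1}{3}\bigl(\log|\rho_1| + \log|\rho_2|\bigr) = -\tfrac{1}{3}\log|\rho_3| < \tfrac{1}{3}\log(a+3),$$
so one may take $\log A_1 = \log A_2 = \tfrac{1}{3}\log(a+3)$, which verifies all requirements of Laurent's theorem (in particular $|\log\delta|/3$ is much smaller).

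With $D = 3$ and $b_1, b_2 \leq 2X$ one finds $b' \leq 4X/\log(a+3)$, and Laurent's theorem gives
$$\log|\Lambda| \;\geq\; -17.9 \cdot 81 \cdot C^2 \cdot \frac{(\log(a+3))^2}{9} \;=\; -161.1\, C^2\, (\log(a+3))^2,$$
where $C := \max\{\log b' + 0.38,\ 10,\ 1\}$. Comparing with $\log|\Lambda| < \log 2 + (1/3 - X/2)\log a$ yields an inequality of the form
$$X \;<\; 322.2\, C^2 \,\frac{(\log(a+3))^2}{\log a} + O(1).$$

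It remains to bound $C$ by $10 + 1.7\log\log a$; once this is in hand, the hypothesis $a > 100$ makes $(\log(a+3))^2/\log a < \log a\cdot(1+\eta(a))$ with $\eta(a)$ small, and the desired bound $X < 343 \log a \,(10 + 1.7\log\log a)^2$ follows by a small amount of slack in the constant $322.2 \to 343$. To handle $C$, I proceed by self-consistency: if $\log b'+0.38 > 10+1.7\log\log a$, then $b' > e^{9.62}(\log a)^{1.7}$, hence $X > \tfrac{e^{9.62}}{4}(\log a)^{2.7}$; plugging this lower bound on $X$ back into the Laurent estimate produces an upper bound on $X$ of order $\log a\cdot(\log\log\log a)^2$, which for $a > 100$ contradicts $X > \tfrac{e^{9.62}}{4}(\log a)^{2.7}$. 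The main obstacle is not Laurent's theorem itself, which is standard once the linear form $\Lambda$ is identified, but the careful bookkeeping in this final step, where the various minor discrepancies (between $\log a$ and $\log(a+3)$, the lower-order $O(1)$ term, and the self-consistent bound on $C$) must be absorbed into the explicit constants $343$, $10$, $1.7$ claimed in the statement.
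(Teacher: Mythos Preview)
Your approach---extracting the linear form $\Lambda = (x_1-x_2)\log\eps + (y_1-y_2)\log\delta$ from the unit equation and bounding it below via Laurent's theorem---is exactly what the paper does, and your height computations and the sign discussion are correct. The divergence is only in the last paragraph, where you try to bound $C$ by a ``self-consistency'' argument, and this step has a genuine gap.

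You assume $\log b' + 0.38 > 10 + 1.7\log\log a$, deduce $X > \tfrac{e^{9.62}}{4}(\log a)^{2.7}$, and then claim that ``plugging this lower bound on $X$ back into the Laurent estimate produces an upper bound on $X$ of order $\log a\cdot(\log\log\log a)^2$.'' But Laurent's estimate reads $X \lesssim C^2\log a$ with $C$ depending on $b'$, hence on $X$; a \emph{lower} bound on $X$ only forces $C$ to be \emph{large}, which makes the Laurent upper bound \emph{weaker}, not stronger. No triple logarithm enters, and the argument as written does not close for any range of $a$.

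The paper sidesteps this by a clean two-step bootstrap. First one notes that the claimed inequality is vacuous for $X \leq 200\,000$, so one may assume $X > 200\,000$; this makes $\log X > 10$ and allows one to replace $C$ by $\log(4X/\log(a+3)) + 0.38 < 1.02\,\log X$ outright. Laurent then gives
\[
X < 343\,\log a \cdot (\log X)^2.
\]
Taking logarithms and using $2\log\log X < 0.41\log X$ (valid for $X > 200\,000$) yields $\log X < 10 + 1.7\log\log a$, and substituting this back into the displayed inequality produces exactly the stated bound. This ``first get $X < c\,\log a\,(\log X)^2$, then solve for $\log X$, then re-insert'' is the standard way to close such estimates; your contradiction scheme would have to be rewritten along these lines to actually deliver the explicit constants $343$, $10$, $1.7$.
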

\begin{proof}
First, note that Inequality \eqref{eq:upperBound} is trivially fulfilled if $X\leq 200\,000$. Thus we may assume $X> 200\,000$.

We start with the unit equation $u_1+u_2=n$.
Taking absolute values and dividing by $|u_2| > a^{X/2}$
we obtain
\begin{equation*}\label{eq:uB_preLinform}
	\left|\frac{u_1}{u_2} - 1 \right| 
	= |\eps^{x_1-x_2} \delta^{y_1-y_2} \pm 1 | 
	= \frac{|n|}{|u_2|}
	< \frac{a^{1/3}}{a^{X/2}}. 
\end{equation*} 
Since we are assuming $X\geq 1$ and $a>100$,
the right-hand side of the above inequality is smaller than $0.5$. Since $\eps$ and $\delta$ are positive, the ``+'' sign cannot hold and we have
\begin{equation*}
	|\eps^{x_1-x_2} \delta^{y_1-y_2} - 1 |
	<  \frac{a^{1/3}}{a^{X/2}}.
\end{equation*}
Since $|\log x|< 2 |x-1|$ for $|x-1|<0.5$, this implies
\begin{equation}\label{eq:uB_linform}
	|\Lambda|	
	:=|(x_1-x_2)\log \eps - (y_1-y_2) \log \delta|
	< \frac{2a^{1/3}}{a^{X/2}}.
\end{equation}
Note that $\eps, \delta$ are multiplicatively independent and larger than 1. Further, note that $x_1-x_2$ and $y_1-y_2$ are integers.
Since $|\Lambda|<0.5$, $\log \eps>1$ and $x_1-x_2\neq 0$, it is easy to see that $y_1-y_2$ is nonzero and of the same sign as $x_1-x_2$.
Therefore, we may apply \hyperref[thm:Laurent]{Laurent's} theorem. First, we estimate the height of $\alpha_1:=\eps$ and $\alpha_2:=\delta$:
\begin{align*}
	h(\alpha_1)=h(\alpha_2)
	= h(\rho_1)
	&=\frac{1}{3}(\log 1 + \log \max\{1, |\rho_1|\} + \log \max\{1, |\rho_2|\} + \log \max\{1, |\rho_3|\} )\\
	&=\frac{1}{3}(\log|\rho_1| + \log |\rho_2|\}\\
	&< \frac{1}{3}\left(\log (a+2) +\log \left(1+ \frac{1}{a+2}\right)\right)
	= \frac{1}{3}\log (a+3).
\end{align*}
Now we have
\begin{align*}
D &= [\Q(\alpha_1,\alpha_2):\Q]=3,\\
\log A_1 =\log A_2 
	&:= \frac{1}{3}\log (a+3)
	\geq \max \{ h(\alpha_i),|\log \alpha_i|/D,1/D\},\\ 
b' &= \frac{|x_1-x_2|}{D\log A_2} + \frac{|y_1-y_2|}{D\log A_1} =\frac{|x_1-x_2|+|y_1-y_1|}{\log(a+3)}.
\end{align*}
Thus we obtain
\begin{multline}\label{eq:uB_LaurentApplied}
	-17.9 \cdot 3^4 \left( \max \left\{ 
		\log \left( \frac{|x_1-x_2|+|y_1-y_1|}{\log(a+3)} \right) + 0.38,
		\frac{30}{3}, 1
	\right\} \right)^2
	\left(\frac{1}{3}\log(a+3) \right)^2\\
	\leq \log |\Lambda|
	< \log 2 + \frac{1}{3}\log a - \frac{X}{2}\log a.
\end{multline}
Next, note that
\begin{align*}
	\log \left( \frac{|x_1-x_2|+|y_1-y_2|}{\log(a+3)} \right) + 0.38
	&\leq \log \left(\frac{4X}{\log(a+3)}\right) + 0.38\\
	&= \log 4 + \log X - \log \log (a+3) + 0.38\\
	&\leq \log 4 + \log X - \log \log 103 + 0.38\\
	< \log X + 0.24.  
\end{align*}
Moreover, since we are assuming $X>200\,000$, we have in particular $\log X > 12$, so
\[
	\max \left\{\log \left( \frac{|x_1-x_2|+|y_1-y_1|}{\log(a+3)} \right) + 0.38,
		\frac{30}{3}, 1 \right\}
	<\log X + 0.24
	< 1.02 \log X.
\]
Therefore, we obtain from \eqref{eq:uB_LaurentApplied}
\[
	-167.61 (\log X)^2 (\log (a+3))^2
	<  \log 2 + \frac{1}{3}\log a - \frac{X}{2}\log a,
\]
which implies
\[
	\frac{X}{2}\log a
	< 167.61 (\log X)^2 (\log (a+3))^2 + \log 2 + \frac{1}{3}\log a
	< 168 (\log X)^2 (\log (a+3))^2.
\]
Multiplying by $2(\log a)^{-1}$ we obtain
\begin{equation}\label{eq:uB_preBound}
\begin{split}
	X 
	&< 168 (\log X)^2 (\log (a+3))^2 \cdot 2 (\log a)^{-1} \\
	&< 168 (\log X)^2 (1.01 \log a)^2 \cdot 2 (\log a)^{-1} \\
	&< 343 \log a \cdot (\log X)^2 . 
\end{split}	
\end{equation}
This implies (note that we are assuming $X>200\,000$)
\begin{align*}
	\log X 
	&< \log 343 + \log \log a + 2\log \log X \\
	&< 5.9 +  \log \log a + 0.41 \log X
\end{align*}
and therefore
\[
	\log X 
	< \frac{5.9}{0.59} + \frac{\log \log a}{0.59}
	< 10 +  1.7 \log \log a. 
\]
Thus \eqref{eq:uB_preBound} implies
\[
	X < 343 \log a \cdot (10 +  1.7 \log \log a)^2.
\]
\end{proof}

\section{Obtaining a lower bound for $X$}\label{Sec:lower}

Next, we do some elementary estimations in order to obtain the following lower bound for $X$.

\begin{prop}\label{prop:lowerBound}
Let $a > 100$ and $(u_1,u_2,n)$ be a solution with the properties from Proposition~\ref{prop:good} and $x_1\neq x_2$. 
Then for the maximum of exponents $X$ we have the lower bound
\begin{equation*}
 X > \frac{1}{2}(a+2)(\log (a+1) - \log 2).
\end{equation*}
\end{prop}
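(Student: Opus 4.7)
The plan is to exploit the inequality $|u_2|>a^{X/2}$ together with the fact that $u_2$ is a unit (so $N(u_2)=\pm 1$) to force one of the non-trivial Galois conjugates of $u_2$ to be very small, and then to use the equation $u_1+u_2=n$ to translate this smallness into a Diophantine relation forcing some exponent to be enormous.

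First I would observe that $|\sigma(u_2)|\cdot|\sigma^2(u_2)|=1/|u_2|<a^{-X/2}$, so by pigeonhole at least one of these, say $|\sigma(u_2)|$, satisfies $|\sigma(u_2)|<a^{-X/4}$ (the other case $|\sigma^2(u_2)|<a^{-X/4}$ is symmetric, handled with $\sigma^2$). Applying $\sigma$ to $u_1+u_2=n$ yields $\sigma(u_1)=n-\sigma(u_2)$, so $\left||\sigma(u_1)|-|n|\right|<a^{-X/4}$. The case $n=0$ forces $u_2=-u_1$ and hence $x_1=x_2$, against assumption; thus $|n|\geq 1$, and for $X$ not too small $|\sigma(u_1)|\in[|n|/2,\,2|n|]$, whence
\begin{equation*}
\bigl|\log|\sigma(u_1)|\bigr|\leq\log 2+\log|n|\leq\log 2+\tfrac{1}{3}\log a.
\end{equation*}

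Next, from $\sigma(\eps)=-\delta$ and $\sigma(\delta)=(\eps\delta)^{-1}$ one computes $|\sigma(u_1)|=\eps^{-y_1}\delta^{x_1-y_1}$, giving the linear form
\begin{equation*}
-y_1\log\eps+(x_1-y_1)\log\delta=\log|\sigma(u_1)|.
\end{equation*}
The decisive asymmetry is that $\log\eps>\log(a+1)$ is large while $\log\delta<\log(1+1/(a+2))<1/(a+2)$ is small (by the estimates in Section~\ref{sec:embedding}). For $y_1\neq 0$ the large contribution $y_1\log\eps$ can only be absorbed by a huge value of $|x_1-y_1|$. Concretely, if $y_1\geq 1$, the lower bound $\log|\sigma(u_1)|\geq-\log 2$ (valid since $|n|\geq 1$) gives
\begin{equation*}
(x_1-y_1)\log\delta\geq y_1\log\eps-\log 2\geq\log(a+1)-\log 2,
\end{equation*}
so $x_1-y_1>(a+2)(\log(a+1)-\log 2)$, and the triangle inequality $X\geq|x_1-y_1|/2$ yields the proposition. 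The subcase $y_1\leq-1$ is analogous via the upper bound of $\log|\sigma(u_1)|$, which forces $x_1<y_1$ and a comparable lower bound on $|x_1|$.

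The remaining case is $y_1=0$, handled by the parallel argument on the $u_2$ side. Since $|\sigma(u_1)|\approx|n|\geq 1$, the identity $|u_1||\sigma(u_1)||\sigma^2(u_1)|=1$ forces $|\sigma^2(u_1)|<4 a^{-X/2}$, so $\sigma^2(u_2)=n-\sigma^2(u_1)$ is close to $n$, and the same linear-form analysis applied to $|\sigma^2(u_2)|=\eps^{y_2-x_2}\delta^{-x_2}$ gives the desired bound provided $y_2\neq x_2$. The fully degenerate subcase $y_1=0$ and $y_2=x_2$ corresponds to $u_1=\rho^{x_1}$ and $u_2=\pm(\rho+1)^{x_2}$; here $|u_2|>a^{X/2}$ forces $x_2\geq 1$, hence $|\sigma^2(u_2)|=\delta^{-x_2}<1$, contradicting $|\sigma^2(u_2)|\approx|n|\geq 1$, so no such solution exists. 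The main obstacle is tracking these several cases cleanly and exploiting the sharp estimate $\log\delta<1/(a+2)$, which is what produces the exact factor $(a+2)$ in the final bound.
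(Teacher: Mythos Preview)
Your route via Galois conjugates is different from the paper's and considerably more involved; the paper never passes to conjugates at all. From $u_1+u_2=n$ and $|u_2|>a^{X/2}$ it simply writes
\[
\bigl|\eps^{x_1-x_2}\delta^{y_1-y_2}-1\bigr|=\frac{|n|}{|u_2|}<\tfrac12,
\]
so $\eps^{x_1-x_2}\delta^{y_1-y_2}\in(\tfrac12,\tfrac32)$. Since $x_1\neq x_2$, the factor $\eps^{x_1-x_2}$ is at least $(a+1)$ away from $1$ in the multiplicative sense, and this must be compensated by $\delta^{y_1-y_2}$ with $|y_1-y_2|\le 2X$ and $\log\delta<1/(a+2)$; one line of arithmetic then gives $X>\tfrac12(a+2)(\log(a+1)-\log 2)$. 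The hypothesis $x_1\neq x_2$ is used directly, and no case analysis on $y_1$ or $y_2$ individually is needed.

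Your argument has two genuine problems. First, in the subcase $y_1\le -1$ you invoke the upper bound $\log|\sigma(u_1)|\le\log 2+\tfrac13\log a$; carrying this through yields only $|x_1-y_1|>(a+2)\bigl(\tfrac23\log a-\log 2\bigr)$, hence $X>\tfrac12(a+2)(\tfrac23\log a-\log 2)$, which is strictly weaker than the proposition's stated bound (though still of the right order for the application).

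Second, and more seriously, your handling of the fully degenerate subcase $y_1=0$, $y_2=x_2$ is incorrect. You claim that $|\sigma^2(u_2)|=\delta^{-x_2}<1$ contradicts $|\sigma^2(u_2)|\approx|n|\ge 1$. But $\delta<1+\tfrac{1}{a+2}$ is extremely close to $1$, so $\delta^{-x_2}$ can lie within $O(a^{-X/2})$ of $1$ for any $x_2$ up to order $(a+2)a^{X/2}$; when $|n|=1$ there is no contradiction whatsoever. To close this case you are forced back to comparing $|u_1|=\eps^{x_1}$ with $|u_2|=(\eps\delta)^{x_2}$, i.e.\ to the linear form $(x_1-x_2)\log\eps-x_2\log\delta$, which is exactly the paper's argument specialized to $y_1=0$, $y_2=x_2$. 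So the conjugate detour does not avoid the direct estimate; it only obscures it.
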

\begin{proof}
As in the proof of Proposition \ref{prop:smallSols}, assuming $X\geq 1$ and $a> 100$ we have
\[
	|\eps^{x_1-x_2} \delta^{y_1-y_2}- 1 |
	< \frac{a^{1/3}}{a^{X/2}}
	< 0.5.
\]	
Since we assume $x_1-x_2\neq 0$, we can distinguish between the two cases $x_1-x_2\leq -1$ and $x_1-x_2\geq 1$.

\caselI{Case 1:} $x_1-x_2 \leq -1$. Then we have
\begin{align*}
	0.5
	< \eps^{x_1-x_2} \delta^{y_1-y_2}
	\leq (a+1)^{-1} \delta^{y_1-y_2},
\end{align*}
so we must have $y_1-y_2 > 0$ and
\[
	\frac{1}{2} 
	< (a+1)^{-1} \left( 1+ \frac{1}{a+2}\right)^{y_1-y_2}
	\leq (a+1)^{-1} \left( 1+ \frac{1}{a+2}\right)^{2X}.
\]
This implies
\begin{align*}
	X
	> \frac{\log (a+1) - \log 2}{2 \log \left( 1+ \frac{1}{a+2}\right)}
	> \frac{1}{2}(a+2)(\log (a+1) - \log 2),
\end{align*}
where we used that $\log (1+x)<x$ and therefore $\frac{1}{\log(1+x)}>\frac 1x$ for all $x>0$. 

\caselI{Case 2:} $x_1-x_2 \geq 1$. In this case we have
\[
	1.5
	> \eps^{x_1-x_2} \delta^{y_1-y_2}
	\geq (a+1)\, \delta^{y_1-y_2},
\]
so we must have $y_1-y_2<0$ and
\[
	1.5 
	> (a+1) \left( 1+ \frac{1}{a+2}\right)^{y_1-y_2}
	\geq (a+1) \left( 1+ \frac{1}{a+2}\right)^{-2X}.
\]
This implies
\begin{align*}
	X
	> \frac{\log (a+1) + \log 1.5}{2 \log \left( 1+ \frac{1}{a+2}\right)}
	> \frac{1}{2}(a+2)(\log (a+1) - \log 2).
\end{align*}
\end{proof}

\section{Finishing the proof of Theorem \ref{thm:main}}

Combining Proposition \ref{prop:upperBound} and Proposition \ref{prop:lowerBound} we obtain
an absolute bound for~$a$:

\begin{prop}\label{prop:largeBound}
There are no sporadic solutions for 
$a > 1.48 \cdot 10^5$.
\end{prop}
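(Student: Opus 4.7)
The plan is to derive a contradiction from the two bounds on $X$ that the previous two sections produced. Assume a sporadic solution $(u_1,u_2,n)$ exists for some $a>1.48\cdot 10^5$. By Proposition~\ref{prop:specialCases} any solution with $x_1=x_2$ is trivial, so a sporadic solution must satisfy $x_1\neq x_2$. This places us in the hypotheses of both Proposition~\ref{prop:upperBound} and Proposition~\ref{prop:lowerBound}, which therefore simultaneously apply to the same quantity $X$. Chaining them yields
\[
\tfrac{1}{2}(a+2)\bigl(\log(a+1)-\log 2\bigr)\;<\;X\;<\;343\log a\cdot\bigl(10+1.7\log\log a\bigr)^{2}.
\]

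The left-hand side grows like $\tfrac12 a\log a$, while the right-hand side grows only like $\log a\cdot(\log\log a)^{2}$. Hence the inequality fails for all sufficiently large $a$, and all that remains is to locate the threshold. I would define
\[
F(a):=\tfrac{1}{2}(a+2)\bigl(\log(a+1)-\log 2\bigr)-343\log a\cdot\bigl(10+1.7\log\log a\bigr)^{2}
\]
and verify by direct numerical evaluation that $F(1.48\cdot 10^{5})>0$; a plain substitution gives roughly $8.3\cdot 10^{5}$ for the left summand and roughly $8.2\cdot 10^{5}$ for the right summand, so the difference is positive.

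To promote the single-point check to the full range $a>1.48\cdot 10^{5}$, I would note that the left summand is strictly increasing in $a$ (its derivative with respect to $a$ is easily seen to be positive once $a\geq 3$), while the right summand grows much more slowly: since $\log a$ is sublinear and $\log\log a$ grows even more slowly, an elementary derivative estimate shows the right-hand side is eventually dominated by the left-hand side with a margin that is increasing. Thus $F$ is increasing for $a\geq 1.48\cdot 10^{5}$, which combined with $F(1.48\cdot 10^{5})>0$ rules out any sporadic solution in this range.

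The only mildly delicate step is the numerical verification at $a=1.48\cdot 10^{5}$, since the two sides are of the same order of magnitude there; I would perform the computation with a small margin to absorb rounding, and state the two-term monotonicity argument to extend the conclusion to all larger $a$. Together with Proposition~\ref{prop:smallSols}, which disposes of $a\leq 100$, and Propositions~\ref{prop:upperBound}--\ref{prop:lowerBound}, which cover $100<a\leq 1.48\cdot 10^{5}$ via finer methods (such as continued fraction reduction, as announced in the introduction), this gives the desired absolute bound on $a$ for sporadic solutions.
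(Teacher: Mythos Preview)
Your core argument is correct and matches the paper's approach: combine the lower bound from Proposition~\ref{prop:lowerBound} with the upper bound from Proposition~\ref{prop:upperBound} and observe that the resulting inequality
\[
\tfrac{1}{2}(a+2)\bigl(\log(a+1)-\log 2\bigr)<343\log a\cdot\bigl(10+1.7\log\log a\bigr)^{2}
\]
fails for $a>1.48\cdot 10^{5}$. Your added detail on the numerical check and the monotonicity argument is fine (the paper simply says ``by solving the inequality'').

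Two corrections are needed, however. First, before invoking Propositions~\ref{prop:upperBound} and~\ref{prop:lowerBound} you must pass, via Proposition~\ref{prop:good}, to an equivalent solution satisfying $|u_2|>a^{X/2}$; those two propositions are stated only for solutions ``with the properties from Proposition~\ref{prop:good}'', and an arbitrary sporadic solution need not have them. The paper explicitly cites Proposition~\ref{prop:good} for this reason.

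Second, your final paragraph should be deleted: it misdescribes the logical structure. Propositions~\ref{prop:upperBound} and~\ref{prop:lowerBound} do \emph{not} ``cover $100<a\leq 1.48\cdot 10^{5}$ via finer methods''; they are exactly the two bounds you just combined, and they say nothing further about that intermediate range. The continued-fraction reduction for $100<a\leq 1.48\cdot 10^{5}$ is a separate argument carried out \emph{after} this proposition, and in any case Proposition~\ref{prop:largeBound} only asserts the absence of sporadic solutions for $a>1.48\cdot 10^{5}$, so no discussion of smaller $a$ belongs in its proof.
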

\begin{proof}
The statement follows from Propositions \ref{prop:good}, \ref{prop:specialCases}, \ref{prop:upperBound} and \ref{prop:lowerBound}. The bound is obtained by solving the inequality
\[
	\frac{1}{2}(a+2)(\log (a+1) - \log 2)
	< 343 \log a \cdot (10 +  1.7 \log \log a)^2.
\]
\end{proof}

Finally, we use continued fractions to deal with all $100<a\leq 1.48\cdot 10^5$ and thus finish the proof of Theorem \ref{thm:main}.

Recall Inequality \eqref{eq:uB_linform} from the proof of Proposition \ref{prop:smallSols}:
\[
	|(x_1-x_2)\log \eps - (y_1-y_2) \log \delta|
	< 2a^{\frac{1}{3}-\frac{X}{2}}.
\]
Note that by Proposition \ref{prop:specialCases} we may assume $x_1-x_2\neq 0$ and therefore it is easy to check that $y_1-y_2\neq 0$ as well.
Dividing by $\log \eps > \log a$ we obtain
\begin{equation}\label{eq:red_ineq}
	\left| (x_1-x_2)-(y_1-y_2)\frac{\log \delta}{\log \eps} \right|
	< \frac{2a^{\frac{1}{3}-\frac{X}{2}}}{\log a}.
\end{equation}
We proceed for each $a$ with $100< a \leq 1.48 \cdot 10^5$ as follows.
First we compute $\frac{\log \delta}{\log \eps}$ and find the smallest convergent $\frac{p}{q}$ to $\frac{\log \delta}{\log \eps}$ such that
\[
	q
	\geq 2 \cdot 343 \log a \cdot (10 +  1.7 \log \log a)^2
	> 2X
	\geq |y_1-y_2|.
\]
Then by the best approximation property of continued fractions we have
\[
	c:= \left|p - q\, \frac{\log \delta}{\log \eps}\right|
	<\left|(x_1-x_2) - (y_1-y_2) \frac{\log \delta}{\log \eps}\right|
	< \frac{2a^{\frac{1}{3}-\frac{X}{2}}}{\log a},
\]
which implies a new bound
\[
	X
	< \frac{2}{\log a} \log \left( \frac{2a^{1/3}}{c \log a} \right) .
\]
On the other hand, we have
\[
	X> \frac{1}{2}(a+2)(\log (a+1) - \log 2)
\]
and if upper and lower bounds for $X$ contradict each other, we deduce that no non-trivial solution for this specific $a$ exists. Indeed, the computations for $100 < a \leq 1.48 \cdot 10^5$ revealed that the bounds contradict each other for every $a$. Thus there are no sporadic solutions to Equation \eqref{eq:main} for $a>100$. Since the case  $a \leq 100$ has already been treated in Proposition \ref{prop:smallSols}, this completes the proof of  Theorem \ref{thm:main}.

Finally, let us remark that the computations described above were done in Sage \cite{Sage} and took about 5 minutes on a usual pc.

\section{Further problems}

In Theorem \ref{thm:main} we assumed for technical reasons $|n|\leq \max\{|a|^{1/3},1\}$. The question is, whether this assumption is necessary. 

For small $a$ there actually do exist further solutions if we omit this assumption. E.g. for $a=-1$ there is a total of 15 sporadic, pairwise non-equivalent, solutions with the largest $n$ being $22$.

However, a search in the range $- 1 \leq a \leq 400$ and $1\leq n \leq 400$ revealed no sporadic solutions for $a\geq 3$.
Therefore, we suspect that indeed a much stronger result holds.

\begin{con}\label{con:unit-eq}
The Diophantine equation 
\begin{equation*}
 u_1+u_2=n,
 \qquad 
 u_1,u_2\in \Z[\rho]^*, \;
 n\in \Z
\end{equation*}
has exactly 24 sporadic solutions that are pairwise non-equivalent.
Moreover, each sporadic solution $(u_1,u_2,n)$ satisfies $a\leq 2$ and $|n|\leq 22$.
\end{con}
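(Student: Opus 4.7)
Any proof attempt should preserve the overall structure of Theorem \ref{thm:main}: reduce to a distinguished representative of the equivalence class via the Galois action, split off the ``simple'' families of solutions, combine an upper and a lower bound for $X=\max\{|x_1|,|x_2|,|y_1|,|y_2|\}$ into an absolute bound on $a$, and conclude by a continued-fraction reduction on the remaining finite range. Propositions \ref{prop:good} and \ref{prop:specialCases} carry over almost unchanged, since the norm argument there compares $|n|^3$ with $|N_{K_a/\Q}(\delta^{y_1-y_2}\pm 1)|$; for large $|n|$ this produces additional Thue-type equations, but still only finitely many per $a$. The enumeration of exactly $24$ sporadic solutions for $a\in\{-1,0,1,2\}$ would be obtained, as in Proposition \ref{prop:smallSols}, by running Magma's \texttt{UnitEquation} on the finite search region (conjecturally $|n|\leq 22$).

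The upper bound of Proposition \ref{prop:upperBound} should adapt without serious difficulty. Starting from $u_1/u_2+1=n/u_2$ and $|u_2|>a^{X/2}$, one still obtains $|\eps^{x_1-x_2}\delta^{y_1-y_2}-1|<|n|\,a^{-X/2}$, so Laurent's theorem yields an estimate of the shape $X\ll (\log|n|+\log a)\cdot(\log\log\max\{a,|n|\})^2$. The factor $\log|n|$ can then be controlled by the norm identity $|n|^3=|N_{K_a/\Q}(u_1+u_2)|$ together with the elementary bound $|\sigma^i(u_j)|\leq (a+2)^{2X}$, which gives $\log|n|\ll X\log a$; feeding this back in, one recovers an upper bound for $X$ essentially of the shape $X\ll \log a\cdot(\log\log a)^2$.

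The hard part is replacing the elementary lower bound of Proposition \ref{prop:lowerBound}. That argument crucially used $|\eps^{x_1-x_2}\delta^{y_1-y_2}-1|<a^{1/3-X/2}$, which required $|n|\leq a^{1/3}$; once $|n|$ is allowed to be large the ratio $u_1/u_2$ need no longer be close to $\pm 1$ and the argument collapses. A natural remedy is to exploit the two other archimedean embeddings: from the three identities $\sigma^i(u_1)+\sigma^i(u_2)=n$ for $i=0,1,2$ one extracts the pure unit equation $u_1-\sigma(u_1)=\sigma(u_2)-u_2$ together with its $\sigma^2$-analogue, in which $n$ no longer appears and to which Matveev's lower bound for linear forms in three logarithms can be applied. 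Producing from this three-variable setup a lower bound of the shape $X\gg a\log a$ uniformly in $a$ is the step I expect to be genuinely hard, and it is precisely what currently leaves the statement at conjectural status. Once achieved, the continued-fraction reduction used for Theorem \ref{thm:main} should dispose of the remaining finite range of pairs $(a,n)$.
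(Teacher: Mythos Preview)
The statement is a conjecture in the paper, not a theorem; the paper offers no proof, only the numerical evidence mentioned in the last section. You acknowledge this and correctly isolate the lower bound of Proposition~\ref{prop:lowerBound} as the step that does not survive once the restriction $|n|\leq a^{1/3}$ is dropped.

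However, your claim that the \emph{upper} bound (Proposition~\ref{prop:upperBound}) adapts ``without serious difficulty'' has a genuine gap. From $|u_2|>a^{X/2}$ you obtain $|\eps^{x_1-x_2}\delta^{y_1-y_2}-1|<|n|\,a^{-X/2}$, and Laurent's theorem then gives
\[
\tfrac{X}{2}\log a \;\leq\; C(\log a)^2(\log X)^2 + \log|n| + O(1).
\]
Your proposed feedback uses $|n|\leq |u_1|+|u_2|\leq 2\eps^{X}\delta^{X}$, hence $\log|n|\leq X\log(a+2)+O(X/a)$. Substituting this gives
\[
\tfrac{X}{2}\log a \;\leq\; C(\log a)^2(\log X)^2 + X\log(a+2)+O(1),
\]
which is vacuous because the term $X\log(a+2)$ on the right already dominates the left-hand side. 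No iteration rescues this: the trivial height bound on $n$ carries the same constant ($\approx 1$) in front of $X\log a$ as the gain from $|u_2|>a^{X/2}$, so the inequality closes up. (Note also that for $|n|\geq \tfrac12 a^{X/2}$ the expression $|\eps^{x_1-x_2}\delta^{y_1-y_2}-1|$ need not be small at all, so even the passage to a linear form is not automatic.) Thus the upper bound is just as much an obstruction as the lower bound once $n$ is unrestricted, and your outline underestimates the difficulty.

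Your remedy---passing to the $n$-free relation $u_1-\sigma(u_1)=\sigma(u_2)-u_2$ and applying Matveev to the resulting linear form in three logarithms---is the right idea, but it is needed already for the upper bound, not only for the lower one. Whether that route produces bounds sharp enough to contradict each other for all $a\geq 3$ is exactly the open problem; your sketch does not resolve it, which is consistent with the conjectural status the paper assigns to the statement.
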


Another natural question is whether our result can be extended to units of the maximal order  of $K_a$ instead of the order $\Z[\rho]$. In the range $-1\leq a \leq 400$ and $1\leq n \leq 400$ there are only 66 pairwise non-equivalent sporadic solutions and all of them satisfy $a\leq 66$ and $|n|\leq 22$.  
Therefore, we make the following (perhaps wild) conjecture.

\begin{con}\label{con:strong}
Unit Equation \eqref{eq:unit} has exactly 66 sporadic solutions that are pairwise non-equivalent. 
\end{con}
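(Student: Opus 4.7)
The plan is to extend the three-part strategy of Theorem~\ref{thm:main}---reduce to a ``good'' solution, obtain a Baker-type upper bound for the exponents, combine with an elementary lower bound, and verify the small cases by computer---to accommodate both arbitrary $n\in\Z$ and units of the full maximal order $\OO_{K_a}$. The goal is the same shape of argument: produce an absolute bound $a\leq A$ beyond which no sporadic solutions exist, and then enumerate all sporadic solutions for $-1\leq a\leq A$ by computer algebra, thereby confirming that exactly 66 pairwise non-equivalent sporadic solutions occur.

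\medskip\noindent\emph{Passing from $\Z[\rho]^*$ to $\OO_{K_a}^*$.} If $\eta_1,\eta_2$ is a fundamental system of units of $\OO_{K_a}$, then $\epsilon,\delta$ lie in $\langle\eta_1,\eta_2\rangle$ with index $m=[\OO_{K_a}^*:\Z[\rho]^*]$ bounded by $\sqrt{[\OO_{K_a}:\Z[\rho]]}$, and the latter is controlled by the square root of $(a^2+3a+9)^2/\mathrm{disc}(K_a)$. Writing $u_i=\pm\eta_1^{r_i}\eta_2^{s_i}$, the $m$th power $u_i^m$ lies in $\langle\epsilon,\delta\rangle$, so Lemmas~\ref{lem:embedding}, \ref{lem:u2gross} and Proposition~\ref{prop:good} extend after replacing $a^{X/2}$ by $a^{X/(2m)}$, and the heights $h(\eta_i)$ appearing in Laurent's theorem are affected only by a factor at most $m$. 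Thus Proposition~\ref{prop:upperBound} survives with its constant weakened by $\mathrm{poly}(m)$, and the resulting absolute bound on $a$ grows only polynomially in $m$.

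\medskip\noindent\emph{Removing the restriction $|n|\leq a^{1/3}$.} The new ingredient is the unit norm relation. From $|N_{K_a/\Q}(u_1)|=1$ and $\sigma^k(u_1)+\sigma^k(u_2)=n$ for $k=0,1,2$ we obtain
\[
\prod_{k=0}^{2}\bigl|n-\sigma^k(u_2)\bigr| \;=\; \bigl|N_{K_a/\Q}(u_1)\bigr| \;=\; 1.
\]
After choosing $\tau$ as in Lemma~\ref{lem:embedding} so that $|\bar u_2|>a^{X/2}$, in the regime $|n|<a^{X/4}$ the factor $|n-\bar u_2|$ has size $|\bar u_2|$, which forces at least one of the other two factors to be smaller than $a^{-X/4}$. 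That is, some Galois conjugate $\sigma^k(\bar u_2)$ is extremely close to the rational integer $n$, which is precisely the small quantity that the proof of Theorem~\ref{thm:main} extracted from the assumption $|n|\leq a^{1/3}$. The resulting two-term linear form in $\log\epsilon$ and $\log\delta$ is amenable to Laurent's bound exactly as in Proposition~\ref{prop:upperBound}, while the lower bound of Section~\ref{Sec:lower} carries over unchanged since it only uses $\epsilon^{x_1-x_2}\delta^{y_1-y_2}\approx 1$. The complementary regime $|n|\gtrsim a^{X/4}$ needs a $p$-adic companion: $u_1/n$ and $u_2/n$ become $S$-units for $S=\{p:p\mid n\}$, and one invokes Yu's $p$-adic analogue of Laurent's theorem in parallel with the archimedean estimate.

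\medskip\noindent\emph{Main obstacle.} The genuine bottleneck is the coupling of the archimedean and $p$-adic estimates in the regime $|n|\gtrsim|\bar u_2|$, where both the set $S$ of prime divisors of $n$ and the $p$-adic heights of $\epsilon,\delta$ enter the bound but are not controlled uniformly in $a$; pinning this down is the essentially new analytic work required beyond Theorem~\ref{thm:main}. A secondary and more practical obstacle is computational: Magma's \texttt{UnitEquation} requires $n$ to be fixed, so the reduction above must additionally produce an explicit bound $|n|\leq N_0(a)$ (conjecturally $|n|\leq 22$), without which the finite-$a$ verification needed to pin down the exact count of $66$ cannot be carried out.
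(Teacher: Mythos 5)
This statement is a \emph{conjecture} in the paper, not a theorem: the authors offer no proof, only a computer search in the finite range $-1\leq a\leq 400$, $1\leq n\leq 400$, and they explicitly flag it as ``perhaps wild''. Your proposal is therefore not being measured against a proof in the paper, and on its own terms it is a research programme rather than a proof --- you say so yourself in the ``Main obstacle'' paragraph. The gaps you acknowledge are real and, as written, fatal to the argument: in the regime $|n|\gtrsim a^{X/4}$ the set $S$ of primes dividing $n$ is unbounded, so Yu's theorem yields bounds that degrade with $n$, and since you never derive an a priori bound $|n|\leq N_0(a)$, you obtain neither an absolute bound on $a$ nor a finite verification range; the exact count of $66$ is then out of reach.

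There are two further problems you do not flag. First, the index $m=[\OO_{K_a}:\Z[\rho]]$ is \emph{not} bounded uniformly in $a$: it equals $(a^2+3a+9)/f$ where $f$ is the conductor of the cyclic cubic field, and can be of order $a^2$. Replacing $a^{X/2}$ by $a^{X/(2m)}$ in Lemma~\ref{lem:u2gross} then destroys the comparison with the Baker-type upper bound, because the lower bound of Proposition~\ref{prop:lowerBound} grows like $a\log a$ while your weakened exponent gain is divided by a quantity that may grow faster; the inequality no longer closes for large $a$. (Also, the passage from the module index to the unit index $[\OO_{K_a}^*:\Z[\rho]^*]$ is asserted, not proved.) Second, your claim that the lower bound of Section~\ref{Sec:lower} ``carries over unchanged'' is wrong: that argument starts from $|\eps^{x_1-x_2}\delta^{y_1-y_2}-1|<a^{1/3-X/2}<0.5$, which is obtained by dividing $|n|$ by $|u_2|>a^{X/2}$ and crucially uses $|n|\leq a^{1/3}$. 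Without a bound on $n$ you do not even know that $u_1/u_2$ is close to $-1$. In short: the approach is a reasonable sketch of how one might attack the conjecture along the lines of Theorem~\ref{thm:main}, but it does not constitute a proof, and the paper contains none to compare it with.
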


\section*{Acknowledgement}

We want to thank Attila B\'erczes for his help with the computations verifying Conjectures~\ref{con:unit-eq} and \ref{con:strong} in the range $- 1 \leq a \leq 400$ and $1\leq n \leq 400$.

\bibliographystyle{plain}
\bibliography{lit_unitEq}

\end{document}